\documentclass{birkjour}
\usepackage{amssymb}
\usepackage{enumerate}
\usepackage{epstopdf}
%
%

\allowdisplaybreaks

\usepackage{graphicx,hyperref}


\newtheorem{theorem}{Theorem}[section]

\newtheorem{lemma}[theorem]{Lemma}

\newtheorem{definition}[theorem]{Definition}

\def\ds{\displaystyle}

\title[Leslie-Gower model with free boundary] 
      {A Leslie-Gower predator-prey model with a free boundary}

\author[Y. Liu]{\normalsize Yunfeng Liu $^{\small 1}$}

\author[Z. Guo]{\normalsize Zhiming Guo $^{1}$}
\email{guozm@gzhu.edu.cn}
\author[M. El Smaily]{\normalsize Mohammad El Smaily $^{2}$}
\email{m.elsmaily@unb.ca}

\author[L. Wang]{\normalsize Lin Wang $^{2}$}
\email{lwang2@unb.ca}

\address{\\{\small $^{1}$  \sc School of Mathematics and Information Sciences,}\br
{\small \sc Guangzhou University, \br Guangzhou, 510006, PR China}\\
\\
{\small $^{2}$ \sc Department of Mathematics \& Statistics, \br University of New Brunswick,\br Fredericton, NB, E3B5A3, Canada}
}

\subjclass{Primary: 35K57, 35R35, 92C50, 92B99.}
 \keywords{Free Boundary; Leslie-Gower; Biological Invasive, Predator-Prey; Spreading-Vanishing Dichotomy.}


\thanks{The work of ME and LW was supported by  NSERC Discovery Grants from the Natural Sciences and Engineering Research Council of Canada (NSERC).  YL and ZG acknowledge support from the National Natural Science Foundation of China (No.11771104), Program for Changjiang Scholars and Innovative Research Team in University (IRT-16R16).YL was supported by the National Natural Science Foundation of China under Grant No.11271093 and the Innovation Research for the Postgraduates of Guangzhou University under Grant No.2017GDJC-D05.}

\thanks{$^*$ Corresponding author: Zhiming Guo}

\begin{document}

\maketitle
\centerline{\scshape Yunfeng Liu $^{a,b}$, Zhiming Guo $^{a,*}$, Mohammad El Smaily $^{b}$ and Lin Wang $^{b}$}
\medskip
{\footnotesize
 \centerline{$^{a}$ School of Mathematics and Information Science,}   \centerline{Guangzhou University}
   \centerline{ Guangzhou, 510006, PR China}
\centerline{$^{b}$ Department of Mathematics and Statistics,}
   \centerline{University of New Brunswick}
   \centerline{ Fredericton, NB, Canada}



\bigskip


\begin{abstract}
In this paper, we consider a Leslie-Gower predator-prey model in one-dimensional environment. We study the asymptotic behavior of two species evolving in a domain with a free boundary. Sufficient conditions for spreading success and spreading failure are obtained. We also derive sharp criteria for spreading and vanishing of the two species. Finally, when spreading is successful, we show that the spreading speed is between the minimal speed of traveling wavefront solutions for the predator-prey model on the whole real line (without a free boundary) and an elliptic problem that follows from the original model.
\end{abstract}
\maketitle

\section{Introduction}

A variety of models are used to describe the predator-prey interactions. The dynamical relationship between a predator and a prey has long been among the dominant topics in mathematical ecology due to its universal existence and importance. Recently, many works studied the predator-prey system with the Leslie-Gower scheme \cite{MM2003,L-G4,HH1995,L-G1,WN2016,L-G5,ZhouJun2014}. A typical Leslie-Gower predator-prey model is the following
\begin{equation}\label{1.1}
\left\{\begin{array}{l}
\ds {\frac{dN}{dt}=rN\left(1-\frac{N}{G}\right)-bNP,}\vspace{7 pt}\\
\ds{\frac{dP}{dt}=P\left(a-\frac{cP}{N+G_{1}}\right),}
\end{array}\right.
\end{equation}
where $N$ and $P$ denote the population densities of the prey and  predator populations respectively. The parameter $r$ represents the  intrinsic growth rate of the prey species and $G$ stands for its carrying capacity. The parameter $a$ is the growth rate for the predator  and $b$ (resp. $c$) is the maximum value which per capita reduction rate of $N$ (resp. $P$) can attain. $G_{1}$ denotes the extent to which environment provides protection to predator $P$. {\em All parameters are assumed to be positive.}

In order to get the spatiotemporal dynamics of system \eqref{1.1}, the following reaction-diffusion equations are widely accepted
\begin{equation}\label{1.2}
\left\{\begin{array}{ll}
\ds{\frac{\partial N}{\partial t}=d_{1}\frac{\partial^{2} N}{\partial x^{2}} + rN\left(1-\frac{N}{G}\right)-bNP,}~~& t,~x \in \mathbb{R},\vspace{7 pt}\\
\ds{\frac{\partial P}{\partial t}=d_{2}\frac{\partial^{2} P}{\partial x^{2}}+P\left(a-\frac{cP}{N+G_{1}}\right)},\qquad\qquad\qquad&~t,~x \in \mathbb{R}.
\end{array}\right.
\end{equation}
By setting
  $$N=Gu,~P=\frac{aG}{c}\upsilon,~t=\frac{\hat{t}}{r},~x=\sqrt{\frac{d_{1}}{r}}\hat{x},$$
  $$\delta=\frac{abG}{rc},~\alpha=\frac{G_{1}}{G},~\kappa=\frac{a}{r}\text{ and }
  D=\frac{d_{2}}{d_{1}},$$
and dropping the hat sign, \eqref{1.2} turns into the following system
\begin{equation}\label{1.3}
\left\{\begin{array}{ll}
\ds{\frac{ \partial u}{ \partial t}=u_{xx} + u(1-u)-\delta u\upsilon,} &t, x \in \mathbb{R},\vspace{7 pt}\\
\ds{\frac{ \partial \upsilon}{ \partial t}=D\upsilon_{xx}+\kappa\upsilon\left(1-\frac{\upsilon}{u+\alpha}\right),}\qquad\qquad\qquad&t, x \in \mathbb{R} .\
\end{array}\right.
\end{equation}
  System \eqref{1.3} has at least three boundary equilibrium solutions $E_{1}=(0,0),$  $E_{2}=(0,\alpha),$  $E_{3}=(1,0)$. Moreover, if $ \delta\alpha<1,$    there exists a unique interior equilibrium solution $E_{*}=(u^{*}, \upsilon^{*}),$ where $$\ds{\upsilon^{*}=\alpha+u^{*}} \text{ and }\ds{u^{*}=\frac{1-\delta\alpha}{1+\delta}}.$$

Our main objective is to understand the long time behavior of a Leslie-Gower predator-prey model via a free boundary. In this paper, we  consider the following model:
\begin{equation}\label{FBP}
\left\{\begin{array}{ll}
\ds{\frac{\partial u}{\partial t}=u_{xx} + u(1-u)-\delta u\upsilon,}&\text{for all } t>0\text{ and }0<x<h(t),\vspace{7 pt}\\
\ds{\frac{\partial \upsilon}{\partial t}=D\upsilon_{xx}+\kappa\upsilon\left(1-\frac{\upsilon}{u+\alpha}\right),}&\text{for all }t>0\text{ and }0<x<h(t), \vspace{7 pt}\\
h'(t)=-\mu (u_{x}(t,h(t))+\rho \upsilon_{x}(t,h(t))),&\text{for all }t>0,\vspace{7 pt}\\
h(0)=h_{0},\vspace{7 pt}\\
u_{x}(t,0)=\upsilon_{x}(t,0)=u(t,h(t))=\upsilon(t,h(t))=0,~&\text{for all }t>0,\vspace{7 pt}\\
u(0,x)=u_{0}(x)~\text{ and }~\upsilon(0,x)=\upsilon_{0}(x),\qquad& \text{for all }x\in[0,h_{0}],
\end{array}\right.
\end{equation}
with the positive parameters $\mu,$  $\rho > 0.$ The initial data $(u_{0},\upsilon_{0})$ satisfy
\begin{equation}\label{1.5}
\left\{\begin{array}{l}
u_{0},~\upsilon_{0}\in C^{2}([0,h_{0}]),\vspace{7 pt}\\
u'_{0}(0)=\upsilon'_{0}(0)=u_{0}(h_{0})=\upsilon_{0}(h_{0})=0,\vspace{7 pt}\\
h_{0}>0,~u_{0}(x)>0 ~ \text{ and }~\upsilon_{0}(x)>0~ \text{ for all }~ x\in [0,h_{0}).
\end{array}\right.
\end{equation}

From a biological point of view, model \eqref{FBP} describes how the two species evolve if they initially occupy the bounded region $[0, h_{0}]$. The homogeneous Neumann boundary condition at $x=0$ indicates that the left boundary is fixed, with the population confined to move only to right of the boundary point $x=0.$ We assume that both species have a tendency to emigrate throught the right boundary point to obtain their new habitat: the free boundary $x=h(t)$ represents the spreading front. Moreover, it is assumed that the expanding speed of the free boundary is proportional to the normalized population gradient at the free boundary. This is well-known as the Stefan condition.

Many previous works study free boundary problems  in predator-prey models. We refer the reader, for instance, to \cite{ziyou1,ziyou2,ziyou3,ziyou4} and references cited therein.

\medskip
In this paper, we have been working under the following assumption
$$\text{\bf{(H1)}}:\quad \delta\alpha+\delta<1.$$

\paragraph*{Organization of the paper.}  In Section 2, we use a contraction mapping argument to prove the local existence and uniqueness of the solution to \eqref{FBP}, then make use of suitable estimates on the solution to show that it exists for all time $t>0$. In Section 3, we derive several lemmas which will be used later. Section \ref{dichotomy} is devoted to the long time behavior of $(u,\upsilon),$ proving a spreading-vanishing dichotomy and finally deriving criteria for spreading and vanishing. We estimate the spreading speed  in Section \ref{spreading.estimate} and then summarize through a brief discussion in Section \ref{discuss}.

\section{Existence and uniqueness of solutions}
 In this section, we first state a result about the local existence and uniqueness of a solution to \eqref{FBP} in Lemma \ref{local.exist}. Then we derive a priori  estimates (Lemma \ref{estimates}) in order justify that the solution is defined for all time $t>0$. The global existence of a solution to the system \eqref{FBP} is stated in Theorem \ref{th5.1}.
\begin{lemma}\label{local.exist}
Assume that $(u_{0},\upsilon_{0})$ satisfies the condition \eqref{1.5}, then for any $\theta\in (0,1),$  there is a $T> 0$ such that the problem~(\ref{FBP}) admits a unique solution $(u(t,x), \upsilon(t,x), h(t)),$  which satisfies
$$(u,\upsilon,h) \in C^{\frac{(1+\theta)}{2},1+\theta}(Q_{T})\times C^{\frac{(1+\theta)}{2},1+\theta}(Q_{T})\times C^{1+\frac{\theta}{2}}([0,T]).$$
where $Q_{T}=\{(t,x)\in \mathbb{R}^{2}:~t\in[0,T],~x\in[0,h(t)]\}$.
\end{lemma}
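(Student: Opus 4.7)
The plan is to adapt the well-known straightening-transformation plus contraction-mapping approach that is standard for one-phase Stefan-type problems (as developed in Chen--Friedman and used in subsequent free boundary papers cited in the introduction). First I would fix the moving domain by the change of variables $y=x/h(t)$, which sends $[0,h(t)]$ to $[0,1]$ and rewrites the two parabolic equations as equations on $[0,T]\times[0,1]$ with coefficients depending on $h$ and $h'$. The Stefan condition becomes an ODE for $h(t)$ driven by $u_y(t,1)$ and $v_y(t,1)$, and the fixed Neumann/Dirichlet boundary conditions transfer cleanly since the change of variables is linear in $x$.

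Next, for fixed $T>0$ and constants $M_1,M_2>0$ to be chosen, I would introduce the complete metric space
\begin{equation*}
\mathcal{D}_T=\bigl\{(u,v,h):\ u,v\in C^{(1+\theta)/2,1+\theta}([0,T]\times[0,1]),\ h\in C^{1+\theta/2}([0,T])\bigr\}
\end{equation*}
consisting of triples satisfying the initial data \eqref{1.5}, the compatibility conditions at $t=0$, $h(0)=h_0$, $h'(0)=-\mu(u_0'(h_0)+\rho v_0'(h_0))$, together with the bounds $\|u-u_0\|_{C^{(1+\theta)/2,1+\theta}},\,\|v-v_0\|_{C^{(1+\theta)/2,1+\theta}}\le M_1$ and $\|h-h_0-h'(0)t\|_{C^{1+\theta/2}}\le M_2$. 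Given $(\tilde u,\tilde v,\tilde h)\in\mathcal{D}_T$, I would construct a map $\mathcal{F}$ by (i) inserting $\tilde h$ into the transformed parabolic system and solving the linear-in-$(u,v)$ Cauchy--Dirichlet/Neumann problem (with the reaction terms evaluated at $(\tilde u,\tilde v)$) using standard parabolic Schauder theory to obtain new $(u,v)$, and then (ii) defining a new $h$ by integrating the Stefan condition $h'(t)=-\mu(u_x(t,h(t))+\rho v_x(t,h(t)))$ against $h(0)=h_0$. Schauder estimates on a fixed domain give uniform $C^{(1+\theta)/2,1+\theta}$ bounds on $(u,v)$ in terms of the data, and the Stefan ODE yields the claimed $C^{1+\theta/2}$ regularity of $h$.

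The two remaining tasks are to show that $\mathcal{F}$ maps $\mathcal{D}_T$ into itself and is a contraction. For self-mapping, once $M_1,M_2$ are chosen large enough in terms of the norms of $u_0,v_0$, a short-time argument exploiting $\|f\|_{C^{\theta/2}([0,T])}\le T^{(1-\theta)/2}\|f\|_{C^{1/2}([0,T])}$ (for $f(0)=0$) absorbs the increments into the prescribed balls once $T$ is taken sufficiently small. For the contraction estimate, I would feed two triples $(\tilde u_i,\tilde v_i,\tilde h_i)$ into $\mathcal{F}$, subtract the resulting linearised systems, and apply Schauder estimates to the differences; the boundedness of $u_0+\alpha$ away from zero near $x=h_0$ (needed to control the Leslie--Gower term $v/(u+\alpha)$) is guaranteed by the $M_1$-bounds for short time, so the reaction terms are uniformly Lipschitz. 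Choosing $T$ small then gives a contraction factor strictly less than one, and Banach's theorem delivers the unique fixed point $(u,v,h)$.

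The principal obstacle will be bookkeeping in the contraction step: the coefficients of the transformed system depend on $\tilde h$ and $\tilde h'$, so one has to control not only the reaction differences $\delta u v$ and $\kappa v(1-v/(u+\alpha))$ but also differences in the elliptic operator itself coming from $\tilde h_1-\tilde h_2$. The standard remedy is to estimate these perturbation terms in $C^{\theta/2,\theta}$ using the bounds on $\tilde h_i$ and then to absorb them by choosing $T$ small; the positivity of $u+\alpha$ is the only place where the sign assumption on the initial data really matters. Once the fixed point is obtained, reversing the change of variables returns a classical solution of \eqref{FBP} with the claimed regularity, and uniqueness follows from the uniqueness of the fixed point.
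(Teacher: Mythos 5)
Your proposal follows the same overall strategy as the paper: straighten the moving domain, then run a contraction mapping argument in the style of Chen--Friedman \cite{Chenxf}. Three implementation points differ and deserve attention. (i) The paper straightens with the localized cutoff transformation $x = y + \zeta(y)(h(t)-h_0)$ rather than the global rescaling $y = x/h(t)$; both are fine here since the domain is bounded, but the cutoff version is the one already in the cited references. (ii) The paper's complete metric space $X_T$ is defined by $C$-norm balls (not Hölder-norm balls) around the initial data, and the contraction is carried out in the metric $\|U_1-U_2\|_C + \|V_1-V_2\|_C + \|h_1'-h_2'\|_C$; the Hölder regularity of the iterates is obtained as an auxiliary a priori bound, not built into the metric. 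Your $\mathcal{D}_T$ is defined by Hölder balls and (implicitly) the Hölder metric; this can be made to work but is heavier, and the interpolation step you sketch for self-mapping must also be invoked in the contraction estimate -- without it the Lipschitz constant need not shrink as $T\to 0$. The paper does exactly this: it first proves $\|\bar U\|_{C^{(1+\theta)/2,1+\theta}} \le C_4\big(\|U_1-U_2\|_C+\|V_1-V_2\|_C+\|h_1-h_2\|_{C^1}\big)$ via $L^p$ theory, and then interpolates back, $\|\bar U\|_C \le T^{(1+\theta)/2}\|\bar U\|_{C^{(1+\theta)/2,1+\theta}}$, which supplies the small factor. (iii) Most substantively, the paper uses $L^p$ theory plus the Sobolev embedding $W^{1,2}_p\hookrightarrow C^{(1+\theta)/2,1+\theta}$, not parabolic Schauder theory. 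This choice is forced by the hypothesis \eqref{1.5}, which only gives $u_0,\upsilon_0\in C^2$: classical Schauder would require $C^{2+\theta}$ initial data with matching compatibility conditions to deliver $C^{1+\theta/2,2+\theta}$ solutions. You should either replace "Schauder" with the $L^p$-plus-Sobolev route, or be careful to invoke only the lower-order estimate (continuous source, $C^{1+\theta}$ data $\Rightarrow C^{(1+\theta)/2,1+\theta}$ solution) -- and in that case the coefficient-difference term in the contraction, which involves $\hat{u}_{1,yy}$, is naturally controlled only in $L^p$ rather than in a Hölder class.
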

The proof of Lemma  \ref{local.exist} will be postponed to Section \ref{exist.}.

\begin{lemma}\label{estimates}
Let $(u,\upsilon,h(t))$ be a solution of \eqref{FBP} for $t\in [0,T]$ \text{for some} $T>0$. Then
\begin{equation}\label{5.6}
0<u(t,x)\leq \max\{1, \|u_{0}\|_{\infty}\}:=M_{1}~\text{ for }~t\in [0,T] \text{ and }x\in [0,h(t)),
\end{equation}
\begin{equation}\label{5.7}
0< \upsilon(t,x)\leq \max\{M_{1}+\alpha, \|\upsilon_{0}\|_{\infty}\}:=M_{2}~\text{ for }~t\in [0,T]\text{ and } x\in [0,h(t)),
\end{equation}
\begin{equation}\label{5.8}
0< h'(t)\leq \Lambda~\text{ for all }~t\in (0,T].
\end{equation}
where $\Lambda> 0$  depends on $\mu,$  $\rho,$  $D,$  $\kappa,$  $\|u_{0}\|_{\infty},$  $\|\upsilon_{0}\|_{\infty},$  $\|u'\|_{C[0,h_{0}]}$ and $\|\upsilon'\|_{C[0,h_{0}]}$.
\end{lemma}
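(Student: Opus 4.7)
The plan is to establish \eqref{5.6}--\eqref{5.8} in two stages: first derive the $L^\infty$ bounds on $u$ and $\upsilon$ by comparison with spatially homogeneous supersolutions, then construct a local parabolic barrier near the moving interface $x=h(t)$ to control $u_x(t,h(t))$ and $\upsilon_x(t,h(t))$, from which the bound on $h'(t)$ follows via the Stefan condition.

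For positivity, zero is a subsolution of each scalar equation and the initial data are nonnegative; the weak maximum principle (with Neumann condition at $x=0$ and homogeneous Dirichlet condition at $x=h(t)$) gives $u,\upsilon\ge 0$, and since $u_0,\upsilon_0>0$ on $[0,h_0)$ the strong maximum principle promotes this to strict positivity in the interior. For the upper bound on $u$, the nonnegativity of $\upsilon$ yields $u_t-u_{xx}\le u(1-u)$, so the constant $M_1=\max\{1,\|u_0\|_\infty\}$ is a supersolution and comparison gives $u\le M_1$. Substituting this into the $\upsilon$-equation produces $\upsilon_t-D\upsilon_{xx}\le\kappa\upsilon\bigl(1-\upsilon/(M_1+\alpha)\bigr)$, so $M_2=\max\{M_1+\alpha,\|\upsilon_0\|_\infty\}$ is a supersolution and $\upsilon\le M_2$.

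For the sign of $h'(t)$, a Hopf-type boundary argument at the interior-vanishing point $x=h(t)$ gives $u_x(t,h(t))<0$ and $\upsilon_x(t,h(t))<0$, so the Stefan condition forces $h'(t)>0$. The upper bound on $h'(t)$ is obtained via a local barrier. Choose $M>0$ large (to be specified) with $M^{-1}<h_0$, and on the strip
\[
\Omega_M=\{(t,x):0\le t\le T,\ h(t)-M^{-1}\le x\le h(t)\}
\]
define $w(t,x)=M_1\bigl[2M(h(t)-x)-M^2(h(t)-x)^2\bigr]$. A direct computation gives $w_t\ge 0$ in $\Omega_M$ (using $h'(t)>0$ and $0\le M(h(t)-x)\le 1$) and $-w_{xx}=2M^2M_1$, so for $M\ge 1/\sqrt{2}$ one has $w_t-w_{xx}\ge M_1\ge u(1-u)-\delta u\upsilon$, making $w$ a supersolution of the $u$-equation. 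The identities $w(t,h(t))=0$ and $w(t,h(t)-M^{-1})=M_1\ge u$ take care of the lateral boundaries, while the Lipschitz estimate $u_0(x)\le\|u_0'\|_\infty(h_0-x)$ together with $w(0,x)\ge M_1M(h_0-x)$ in $\Omega_M$ gives $w(0,\cdot)\ge u_0$ provided $M\ge\|u_0'\|_\infty/M_1$. Comparison then yields $u\le w$ in $\Omega_M$, and since both vanish at $x=h(t)$ this forces $-u_x(t,h(t))\le -w_x(t,h(t))=2MM_1$. An entirely analogous barrier with $M_2$ in place of $M_1$ (and $M$ enlarged further to accommodate $D$, $\kappa$ and $\|\upsilon_0'\|_\infty$) gives $-\upsilon_x(t,h(t))\le 2MM_2$. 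Substituting into the Stefan condition produces \eqref{5.8} with $\Lambda=2\mu M(M_1+\rho M_2)$.

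The only delicate point is selecting a single $M$ that simultaneously dominates the reaction terms in both equations, matches the $C^1$-norm of each initial profile at the free boundary, and satisfies $M^{-1}<h_0$ so that $\Omega_M$ is well defined; once $M$ is pinned down in terms of the constants listed in the statement, the remainder is a routine application of parabolic comparison on the moving strip $\Omega_M$.
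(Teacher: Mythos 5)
Your proposal is correct and follows essentially the same route as the paper: constant (equivalently ODE) supersolutions for the $L^\infty$ bounds, the strong maximum principle together with the Hopf lemma at $x=h(t)$ for positivity and $h'>0$, and the quadratic barrier $M_i\bigl[2M(h(t)-x)-M^2(h(t)-x)^2\bigr]$ on the moving strip $[h(t)-M^{-1},h(t)]$ to bound $-u_x$ and $-\upsilon_x$ at the free boundary, with $M$ chosen to dominate $1/h_0$, the reaction terms, and the initial Lipschitz constants, yielding $\Lambda=2\mu M(M_1+\rho M_2)$. The only cosmetic difference is that the paper compares with the explicit ODE solutions $\bar u'=\bar u(1-\bar u)$ and $\bar\upsilon'=\kappa\bar\upsilon(1-\bar\upsilon/(M_1+\alpha))$, whereas you observe directly that $M_1$ and $M_2$ are constant supersolutions; both arguments are equivalent.
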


The proof of Lemma  \ref{estimates} will be postponed to Section \ref{exist.} as well.

\begin{theorem}\label{th5.1}
Assume that $(u_{0},\upsilon_{0})$ satisfies the condition \eqref{1.5}, then for any $\theta\in (0,1),$  the problem~(\ref{FBP}) admits a unique solution $(u(t,x), \upsilon(t,x), h(t)),$  which satisfies
$$(u,\upsilon,h) \in C^{\frac{(1+\theta)}{2},1+\theta}(Q)\times C^{\frac{(1+\theta)}{2},1+\theta}(Q)\times C^{1+\frac{\theta}{2}}([0,+\infty)),$$
where \[Q=\{(t,x)\in \mathbb{R}^{2}: t\in[0,+\infty),~x\in[0,h(t)]\}.\]
\end{theorem}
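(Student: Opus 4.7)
The plan is to combine the local existence result (Lemma~\ref{local.exist}) with the a priori estimates (Lemma~\ref{estimates}) via a standard continuation argument. First I would invoke Lemma~\ref{local.exist} to obtain a unique solution on some maximal time interval $[0,T_{\max})$, where
\[
T_{\max}:=\sup\{T>0:\ (u,\upsilon,h) \text{ exists on }[0,T]\text{ with the stated regularity}\}.
\]
The goal is then to show that $T_{\max}=+\infty$, and I would argue by contradiction: assume $T_{\max}<+\infty$.

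Next I would use the estimates in Lemma~\ref{estimates} to control the solution uniformly on $[0,T_{\max})$. Namely, $0<u\le M_1$ and $0<\upsilon\le M_2$ with constants independent of $T$, and $0<h'(t)\le\Lambda$, so that $h_0\le h(t)\le h_0+\Lambda T_{\max}$ for all $t\in[0,T_{\max})$. In particular, the free boundary stays in a bounded interval and the two populations remain uniformly bounded in $L^\infty$. Standard interior and boundary parabolic Schauder estimates applied to the $u$ and $\upsilon$ equations (with the Neumann condition at $x=0$ and the Dirichlet condition at $x=h(t)$, after straightening the boundary by the change of variable $y=x/h(t)$) then upgrade these bounds to uniform $C^{(1+\theta)/2,\,1+\theta}$ bounds on $u$ and $\upsilon$, and, through the Stefan condition $h'(t)=-\mu(u_x+\rho\upsilon_x)|_{x=h(t)}$, to a uniform $C^{1+\theta/2}$ bound on $h$.

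With these uniform bounds in hand, I pick some $t_0\in(0,T_{\max})$ close to $T_{\max}$ and consider $(u(t_0,\cdot),\upsilon(t_0,\cdot),h(t_0))$ as new initial data; by the uniform $C^{1+\theta}$ control, these data satisfy the hypothesis \eqref{1.5} with bounds depending only on the a priori constants, not on $t_0$. Applying Lemma~\ref{local.exist} at time $t_0$ therefore yields a solution on $[t_0,t_0+\tau]$ for some $\tau>0$ that depends only on those a priori constants. Choosing $t_0$ so that $t_0+\tau>T_{\max}$ and splicing this solution to the one already constructed on $[0,t_0]$ produces a solution on $[0,T_{\max}+\tau')$ for some $\tau'>0$, contradicting the maximality of $T_{\max}$. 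Hence $T_{\max}=+\infty$. Uniqueness on $[0,+\infty)$ follows from the uniqueness in Lemma~\ref{local.exist} applied on each compact subinterval.

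The main obstacle is the step that guarantees that the local-existence time $\tau$ obtained by re-applying Lemma~\ref{local.exist} at $t_0$ does not collapse to zero as $t_0\uparrow T_{\max}$. This is exactly where the a priori estimates of Lemma~\ref{estimates} are essential: they furnish $t_0$-independent bounds on $\|u(t_0,\cdot)\|_{C^2}$, $\|\upsilon(t_0,\cdot)\|_{C^2}$, and on $h(t_0)$, so that the existence time produced by the contraction mapping argument in Lemma~\ref{local.exist} can be chosen uniformly in $t_0$. Without such uniform control over the initial data at the restart time, the contradiction argument would fail; with it, the theorem follows immediately.
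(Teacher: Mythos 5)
Your continuation argument is correct and is essentially the same as the paper's (very brief) proof, which simply cites uniqueness of the local solution, Zorn's lemma, and the uniform estimates of Lemma~\ref{estimates}; your explicit $T_{\max}$-contradiction formulation is the standard unwinding of that Zorn's-lemma framing. You also correctly identify the key point — that the a priori bounds, together with parabolic regularity, give $t_0$-independent control on the restart data so that the local existence time does not degenerate — which is exactly what the paper's sketch implicitly relies on.
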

\begin{proof}[On the proof of Theorem \ref{th5.1}.]  We only give a brief sketch of the proof here since it is similar to those done in  \cite{dushou} and \cite{dusupandinf2014}: the global existence of the solution to problem \eqref{FBP} follows  from the uniqueness of the local solution, Zorn's lemma and the uniform estimates of $u,$  $\upsilon$ and  $h'(t)$ obtained in Lemma \ref{estimates}, above.\end{proof}
\section{Known results from prior works}
In this section, we recall from prior works some important results that will be used repeatedly in our arguments. We start with some results regarding the stationary state(s) of the  model
\begin{equation}\label{2.1}
\left\{\begin{array}{ll}
\ds{\frac{\partial u}{\partial t}=d u_{xx}+a u(1-bu),}\qquad &(t,x)\in (0, \infty)\times (0, L),\vspace{7 pt}\\
u_{x}(t,0)=u(t,L)=0,  &  t>0.
\end{array}\right.
\end{equation}
The  stationary state will be determined via  the eigenvalue problem
\begin{equation}\label{2.11}
\left\{\begin{array}{ll}
d\phi_{xx}+a\phi=\sigma\phi, \qquad &0< x< L,\vspace{7 pt}\\
\phi_{x}(0)=\phi(L)=0&
\end{array}\right.
\end{equation}
as well as the spatial domain's size.
The following lemma summarizes the result.
\begin{lemma}[\cite{Cantrell2003} and \cite{zhouxiao}]\label{le2.1}
Let $L^{*}=\ds{\frac{\pi}{2}\sqrt{\frac{d}{a}}}$ and $\ds{d^{*}=\frac{4aL^{2}}{\pi^{2}}.}$ Then we have:
\begin{enumerate}[\bf (i)]
\item  if $L\leq L^{*}$, all positive solutions of \eqref{2.1} tend to zero in $C([0,L])$ as $t\rightarrow +\infty.$\\
\item If $L> L^{*}$, then \eqref{2.1} has a minimal positive equilibrium $\phi,$   and all positive solutions to \eqref{2.1} approach $\phi$ in $C([0,L])$ as $t\rightarrow +\infty$.\\
\item If $0< d < d^{*},$  the principal eigenvalue of  \eqref{2.11} is positive ($\sigma_{1}> 0.$) If $d=d^{*}$ then $\sigma_{1}=0,$  and if $d > d^{*}$ then $\sigma_{1}< 0$.
\end{enumerate}
\end{lemma}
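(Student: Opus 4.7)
The plan is to dispatch (iii) by an explicit computation and to use its output as the linchpin of parts (i) and (ii) via comparison and sub/super-solution arguments.

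For (iii), the principal eigenfunction of \eqref{2.11} is of constant sign, and the Neumann--Dirichlet boundary conditions force it, up to scaling, to be $\phi_1(x)=\cos\bigl(\pi x/(2L)\bigr)$. Substituting yields $\sigma_1=a-d\pi^2/(4L^2)$, and the three sign statements amount to comparing $d$ with $d^*=4aL^2/\pi^2$ (equivalently comparing $L$ with $L^*$).

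For (i) with $L<L^*$ (so $\sigma_1<0$), the function $\bar u(t,x)=Me^{\sigma_1 t}\phi_1(x)$ is a supersolution of \eqref{2.1} for $M$ large enough to dominate $u_0$ (using $au(1-bu)\le au$), so comparison yields $u\to 0$ exponentially. The critical case $L=L^*$ is more delicate since linear decay is absent. I would introduce the Lyapunov functional $V(t)=\int_0^L\phi_1(x)\,u(t,x)\,dx$; integration by parts, combined with $d\phi_1''+a\phi_1=0$ and the boundary conditions $\phi_1'(0)=u_x(t,0)=0$, $\phi_1(L)=u(t,L)=0$, gives
\[
V'(t)=-ab\int_0^L\phi_1(x)\,u(t,x)^2\,dx\le 0.
\]
Since $V\ge 0$ and monotone, it converges, so $\int_0^L\phi_1 u^2\,dx\to 0$; because $\phi_1$ is bounded below on every compact subset of $[0,L)$, parabolic regularity upgrades this to $u(t,\cdot)\to 0$ on such compact subsets, and standard boundary estimates combined with the Dirichlet condition at $x=L$ promote the convergence to uniform convergence on $[0,L]$.

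For (ii) with $L>L^*$ (so $\sigma_1>0$), I would invoke sub/super-solution iteration. The constant $\bar u\equiv 1/b$ is a stationary supersolution, and $\underline u=\varepsilon\phi_1$ is a stationary subsolution once $\varepsilon>0$ is so small that $\varepsilon ab\,\phi_1(x)\le\sigma_1$ pointwise; the monotone iteration from $\underline u$ produces an increasing sequence converging to a minimal positive equilibrium $\phi$. For convergence from arbitrary positive initial data, the strong maximum principle gives $u(t_0,\cdot)>0$ on $[0,L)$ for some $t_0>0$, so one can sandwich $\varepsilon\phi_1\le u(t_0,\cdot)\le M/b$ for appropriate $\varepsilon$ small and $M$ large; monotonicity of the iteration squeezes $u(t,\cdot)$ between two trajectories converging to positive equilibria, and a standard sweeping argument for logistic nonlinearities shows the positive equilibrium is unique, so both limits equal $\phi$ and the claimed convergence follows.

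The main obstacle is the critical case $L=L^*$ in part (i): linearization supplies no decay rate, so the argument must extract dissipation directly from the quadratic term through the weighted-$L^1$ Lyapunov functional above, and one must then upgrade convergence on compact subsets of $[0,L)$ to uniform convergence up to the Dirichlet boundary $x=L$.
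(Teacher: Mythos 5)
The paper does not actually prove Lemma~\ref{le2.1}: it cites Propositions 3.1 and 3.2 of Cantrell--Cosner for parts (i) and (ii) and a computation in Zhou--Xiao for part (iii), so there is no internal proof to compare against. Your reconstruction follows what is, as far as I can tell, the same standard monotone-dynamics template those references use, and the main steps are correct: the explicit eigenfunction $\phi_1(x)=\cos(\pi x/(2L))$ gives $\sigma_1=a-d\pi^2/(4L^2)$, which settles (iii); the supersolution $Me^{\sigma_1 t}\phi_1$ (which is a supersolution since $ab\bar u^2\ge 0$) settles the strict case $L<L^*$; and the sub/super-solution sandwich $\varepsilon\phi_1\le u\le M/b$ together with uniqueness of the positive equilibrium for logistic nonlinearities settles (ii).

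Two points in the critical case $L=L^*$ need tightening, though neither is a conceptual flaw. First, from $V\ge 0$ and $V'=-ab\int_0^L\phi_1 u^2\,dx\le 0$ you may conclude that $V$ converges and that $\int_0^\infty\!\int_0^L\phi_1 u^2\,dx\,dt<\infty$, but not directly that $V'(t)\to 0$; you need a Barbalat-type step, using parabolic regularity to get a uniform-in-$t$ bound on $\tfrac{d}{dt}\int_0^L\phi_1 u^2\,dx$, to upgrade integrability to decay. Second, the passage from uniform convergence on compact subsets of $[0,L)$ to uniform convergence on all of $[0,L]$ should be anchored explicitly in the uniform $C^1$ bound near $x=L$ (so $u(t,x)\le C(L-x)$ by $u(t,L)=0$), rather than left as ``standard boundary estimates''; $\phi_1$ degenerates at $x=L$, so the weighted functional gives no information there. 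With these two gaps closed, the argument is a clean self-contained proof of the lemma.
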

For a detailed  proof of (i) and (ii) one can refer to Proposition 3.1 and 3.2 of \cite{Cantrell2003}. The result in (iii) is obtained through a simple computation and can be found in the proof of  Corollary 3.1 in \cite{zhouxiao}.

\medskip

Now, we state a  comparison principle that we will use in the proving the results of Section \ref{dichotomy}, below. This comparison principle is extracted from Lemma 4.1 and Lemma 4.2 of \cite{w3} with minor modifications.
\begin{lemma}\label{le2.2}
Let $\bar{h}$ and  $\underline{h}$ be two  postive $C^{1}([0,+\infty))$  functions {\rm (}$\bar{h}, \underline{h}>0$ in $[0,+\infty)${\rm ).} Denote by
\[\Omega=\left\{(t,x):t>0,x\in[0,\bar{h}(t)]\right\}\] and \[\Omega_{1}=\{(t,x):t>0,x\in[0,\underline{h}(t)]\}.\]
Let $\bar{u}, \bar{\upsilon}\in C(\bar{\Omega})\cap C^{1,2}(\Omega)$ and $\underline{u},\underline{\upsilon}\in C(\bar{\Omega}_{1})\cap C^{1,2}(\Omega_{1}).$ Assume that \[0<\bar{u},~\underline{u}\leq M_{1}\text{ and }0<\bar{\upsilon},\underline{\upsilon}\leq M_{2}\] and  that $(\bar{u},\bar{\upsilon},\bar{h})$ satisfies
\begin{equation}\label{2.2}
\left\{\begin{array}{ll}
\bar{u}_{t}-\bar{u}_{xx}\geq\bar{u}(1-\bar{u}),~\qquad &t>0,0<x<\bar{h}(t),\vspace{7 pt}\\
\bar{\upsilon}_{t}-D\bar{\upsilon}_{xx}\geq \kappa\bar{\upsilon}\left(1-\frac{\bar{\upsilon}}{M_{1}+\alpha}\right),~&t>0,0<x<\bar{h}(t),\vspace{7 pt}\\
\bar{u}_{x}(t,0)\leq 0,\bar{\upsilon}_{x}(t,0)\leq 0,~&t>0,\vspace{7 pt}\\
\bar{u}(t,\bar{h}(t))=\bar{\upsilon}(t,\bar{h}(t))=0,~&t>0,\vspace{7 pt}\\
\bar{h}'(t)\geq -\mu(\bar{u}_{x}(t,\bar{h}(t))+\rho\bar{\upsilon}_{x}(t,\bar{h}(t))),~&t>0,
\end{array}\right.
\end{equation}
and  the couple $(\underline{u}, \underline{h})$ satisfies
\begin{equation}\label{2.20}
\left\{\begin{array}{ll}
\underline{u}_{t}-\underline{u}_{xx}\leq\underline{u}(1-\delta M_{2}-\underline{u}),
~~\qquad \qquad &t>0,0<x<\underline{h}(t),\\
\underline{u}_{x}(t,0)\geq 0,~&t>0,\vspace{7 pt}\\
\underline{u}(t,\underline{h}(t))=0,~&t>0,\vspace{7 pt}\\
\underline{h}'(t)\leq-\mu\underline{u}_{x}(t,\underline{h}(t)),~&t>0
\end{array}\right.
\end{equation}
and the couple $(\underline{\upsilon},\underline{h})$ satisfies
\begin{equation}\label{2.21}
\left\{\begin{array}{ll}
\underline{\upsilon}_{t}-D\underline{\upsilon}_{xx}\leq \kappa\underline{\upsilon}(1-\frac{\underline{\upsilon}}{\alpha}),~\qquad \qquad\qquad &t>0 \quad 0<x<\underline{h}(t),\vspace{7 pt}\\
\underline{\upsilon}_{x}(t,0)\geq 0,~&t>0,\vspace{7 pt}\\
\underline{\upsilon}(t,\underline{h}(t))=0,~&t>0,\vspace{7 pt}\\
\underline{h}'(t)\leq-\mu\rho\underline{\upsilon}_{x}(t,\underline{h}(t)),~&t>0.
\end{array}\right.
\end{equation}
Assume that the initial data of \eqref{2.2} satisfy \[\bar{h}(0)\geq h_{0},~ \bar{u}(0,x), \bar{\upsilon}(0,x)\geq 0\text{ on }[0,\bar{h}(0)]\] and \[\bar{u}(0,x)\geq u_{0}(x)\text{ and }\bar{\upsilon}(0,x)\geq\upsilon_{0}(x)~on~[0,h_{0}],\]
and the initial data  of \eqref{2.20} and \eqref{2.21} satisfy \[\underline{h}(0)\leq h_{0}, ~0<\underline{u}(0,x)\leq u_{0}(x)\text{ and } 0<\underline{\upsilon}(0,x)\leq \upsilon_{0}(x)\text{ on }[0,\underline{h}(0)].\]
Then, the solution $(u,\upsilon,h)$ of \eqref{FBP} satisfies \[\underline{h}(t)\leq h(t)\leq \bar{h}(t)~on~[0,+\infty),\] \[u\leq\bar{u}~\&~\upsilon\leq\bar{\upsilon}\text{ for all }t\geq 0\text{ and } 0\leq x\leq h(t),\] and \[u\geq\underline{u}~\& ~\upsilon\geq\underline{\upsilon}\text{ for all  }t\geq 0 \text{ and }0\leq x\leq \underline{h}(t).\]
\end{lemma}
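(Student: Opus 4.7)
The approach I plan to take is to reduce the coupled free-boundary system to a sequence of scalar parabolic comparison arguments by exploiting the cutoffs $M_1$ and $M_2$ built into the hypotheses. Because $0 < u \leq M_1$ and $0 < \upsilon \leq M_2$ on $[0,h(t)]$ by Lemma~\ref{estimates}, the true solution $(u,\upsilon,h)$ of \eqref{FBP} satisfies
\begin{align*}
u_t - u_{xx} &\leq u(1-u), &u_t - u_{xx} &\geq u(1 - \delta M_2 - u), \\
\upsilon_t - D\upsilon_{xx} &\leq \kappa \upsilon\!\left(1 - \tfrac{\upsilon}{M_1 + \alpha}\right), &\upsilon_t - D\upsilon_{xx} &\geq \kappa \upsilon\!\left(1 - \tfrac{\upsilon}{\alpha}\right),
\end{align*}
so the hypothesised inequalities for $\bar{u}, \bar{\upsilon}, \underline{u}, \underline{\upsilon}$ now admit the same right-hand sides as $u,\upsilon$, and the comparison in each species/bound pair decouples from the other species.

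To prove the upper inequality $h(t) \leq \bar{h}(t)$ I would argue by contradiction, first reducing to strict initial ordering through a small perturbation (for instance, enlarging $\bar{h}$ by $\epsilon(1+t)$ and extending $\bar{u}, \bar{\upsilon}$ accordingly, then letting $\epsilon \to 0$ by continuous dependence). Suppose $t^* \in (0,+\infty)$ is the first time at which $h(t^*) = \bar{h}(t^*)$, so that $h(t) < \bar{h}(t)$ for $t<t^*$. On the region $\{0 \leq t \leq t^*,\; 0 \leq x \leq h(t)\}$, which is contained in $\bar{\Omega}$, the difference $w := \bar{u} - u$ satisfies
\[
w_t - w_{xx} - (1 - \bar{u} - u)\, w \;\geq\; \delta\, u\, \upsilon \;\geq\; 0,
\]
with $w \geq 0$ on the parabolic boundary by the initial-data ordering together with $\bar{u}_x(t,0) \leq 0 = u_x(t,0)$ and $\bar{u}(t,h(t)) \geq 0 = u(t,h(t))$. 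The classical maximum principle yields $w \geq 0$, and the analogous argument applied to $\bar{\upsilon} - \upsilon$ (where the decoupling relies on $u \leq M_1$) gives $\upsilon \leq \bar{\upsilon}$. At the contact point $(t^*, h(t^*))$ both differences vanish while remaining positive in the interior, so Hopf's lemma (applicable via the interior sphere condition at the smooth free boundary) yields $\bar{u}_x(t^*, h(t^*)) < u_x(t^*, h(t^*))$ and $\bar{\upsilon}_x(t^*, h(t^*)) < \upsilon_x(t^*, h(t^*))$. Inserting these strict inequalities into the two Stefan conditions forces $\bar{h}'(t^*) > h'(t^*)$, which contradicts the fact that at a first crossing of two $C^1$ curves one must have $\bar{h}'(t^*) \leq h'(t^*)$.

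The companion lower bounds $\underline{h}(t) \leq h(t)$ and the pointwise inequalities $\underline{u} \leq u$, $\underline{\upsilon} \leq \upsilon$ on the subsolution domain follow from the mirror-image version of the same argument: systems \eqref{2.20} and \eqref{2.21} are already decoupled scalar Stefan-type problems, and the inequalities $u_t - u_{xx} \geq u(1 - \delta M_2 - u)$ and $\upsilon_t - D\upsilon_{xx} \geq \kappa \upsilon(1 - \upsilon/\alpha)$ supply the ``right-hand-side'' for each comparison. The step I expect to require the most care is the perturbation ensuring strict ordering of the free boundaries at $t = 0$ without destroying either the differential inequalities \eqref{2.2}, \eqref{2.20}, \eqref{2.21} or the Stefan-type boundary laws: one must choose perturbations of $\bar{h}$ and $\underline{h}$ that remain compatible with all three systems, and then pass to the limit using continuous dependence on the data. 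Once this technicality is handled, the remainder is a routine application of the scalar parabolic maximum principle and Hopf's lemma, in the spirit of Lemmas~4.1 and~4.2 of \cite{w3}.
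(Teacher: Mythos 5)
Your proposal captures the correct overall structure, and it is essentially the approach of Guo--Wu (Lemma 5.1 of \cite{b}) and Wang (Lemmas 4.1--4.2 of \cite{w3}), to which the paper defers without giving details. The decoupling into scalar parabolic comparisons via the cutoffs $M_1, M_2$ is exactly right: using $0 < \upsilon \leq M_2$ and $0 < u \leq M_1$ to turn the coupled system into one-sided scalar differential inequalities for $u$ and $\upsilon$ separately, then running a maximum-principle-plus-Hopf argument at a putative first contact time $t^*$ of the free boundaries, is the standard mechanism. The algebraic identities you use (for instance $\bar{u}(1-\bar{u}) - u(1-u) = (1 - \bar{u} - u)(\bar{u}-u)$ and the monotonicity of $s \mapsto \upsilon/(s+\alpha)$) are correct, and the contradiction $\bar{h}'(t^*) > h'(t^*)$ versus $\bar{h}'(t^*) \leq h'(t^*)$ at a first touching of two $C^1$ curves is sound.

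The one step you should revise is the device for removing non-strict initial equality. Perturbing $\bar{h}$ outward by $\epsilon(1+t)$ and ``extending $\bar{u}, \bar{\upsilon}$ accordingly'' is the wrong side to perturb: there is no canonical extension of an arbitrary supersolution to a larger domain that simultaneously preserves the two parabolic inequalities, the zero boundary value on the new free boundary, and the Stefan-type inequality; in general such an extension does not exist, so this is not merely delicate but ill-posed as stated. The device used in the references you are imitating is to perturb the \emph{solution's data}: replace $h_0$ by $h_0(1-\epsilon)$, $\mu$ by $\mu(1-\epsilon)$, and $(u_0,\upsilon_0)$ by slightly smaller data $(u_0^\epsilon,\upsilon_0^\epsilon) \to (u_0,\upsilon_0)$ in $C^2$, so that the perturbed solution $(u^\epsilon,\upsilon^\epsilon,h^\epsilon)$ has $h^\epsilon(0) = h_0(1-\epsilon) < h_0 \leq \bar{h}(0)$ strictly. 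The first-contact argument then gives $h^\epsilon(t) < \bar{h}(t)$ for all $t>0$, and letting $\epsilon \to 0$ via continuous dependence of the free boundary problem on its data yields $h(t) \leq \bar{h}(t)$. This works because the true solution satisfies a well-posed problem with known stability, whereas the super/subsolutions are only assumed to satisfy inequalities. With that substitution your argument goes through; the rest (Hopf's lemma at the contact point, the mirror argument for $\underline{h}, \underline{u}, \underline{\upsilon}$, and the use of both partial Stefan inequalities in \eqref{2.20}, \eqref{2.21}) is as you describe.
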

\noindent The proof of Lemma \ref{le2.2} is very similar to the proofs of Lemma 5.1 of \cite{b}, Lemma 4.1 and Lemma 4.2 of \cite{w3}. We hence omit the details here.

\medskip
In order to discuss the spreading of the species, we will use Lemma A.2, Lemma A.3 of \cite{wx1} and Proposition 8.1 of \cite{w2}. We restate these results here for the reader's convenience.
\begin{lemma}\label{le2.3}
Let $M\geq 0.$ For any given $\varepsilon>0$ and $l_{\varepsilon}>0,$  there exist $\ds{l>\max\left\{l_{\varepsilon}, \frac{\pi}{2}\sqrt{\frac{d}{a}}\right\}}$ such that, if the continuous and non-negative function $U(t,x)$ satisfies
\begin{equation}\label{2.3}
\left\{\begin{array}{ll}
U_{t}-dU_{xx}\geq U(a-bU), \qquad &t>0, 0<x<l,\vspace{7 pt}\\
U_{x}(t,0)=0,\quad U(t,l)\geq M,& t>0,
\end{array}\right.
\end{equation}
and if $U(0,x)>0$ in $[0,l),$  then $$\ds{\liminf_{t\rightarrow+\infty}U(t,x)>\frac{a}{b}-
\varepsilon\text{ uniformly on }
[0,l_{\varepsilon}].}$$
\end{lemma}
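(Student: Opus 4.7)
The plan is to construct a subsolution from the classical logistic diffusion problem with Dirichlet data at $x=l$, compare it to $U$ via the parabolic comparison principle, and then exploit the large-domain behaviour of the corresponding positive steady state.

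Fix $\varepsilon>0$ and $l_\varepsilon>0$. For each $l>L^{*}=\frac{\pi}{2}\sqrt{d/a}$ consider the auxiliary problem
\begin{equation*}
\left\{\begin{array}{ll}
\underline{U}_t=d\underline{U}_{xx}+\underline{U}(a-b\underline{U}),& t>0,\ 0<x<l,\\
\underline{U}_x(t,0)=0,\ \underline{U}(t,l)=0,& t>0,\\
\underline{U}(0,x)=\underline{U}_0(x),& 0\le x\le l,
\end{array}\right.
\end{equation*}
where $\underline{U}_0$ is any smooth function with $0<\underline{U}_0(x)\le U(0,x)$ on $[0,l)$ and $\underline{U}_0(l)=0$ (such a choice exists since $U(0,\cdot)>0$ on $[0,l)$). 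Since $l>L^{*}$, Lemma \ref{le2.1}(ii) applies: the problem admits a unique positive equilibrium $\phi_l$, and $\underline{U}(t,\cdot)\to\phi_l$ in $C([0,l])$ as $t\to+\infty$.

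Now I would apply the standard parabolic comparison principle on the cylinder $(0,\infty)\times[0,l]$. The two problems agree at $x=0$ (homogeneous Neumann), while at $x=l$ we have $\underline{U}(t,l)=0\le M\le U(t,l)$; at $t=0$ we have $\underline{U}_0\le U(0,\cdot)$; and the differential inequality for $U$ is $\ge$ that for $\underline{U}$. Hence $U(t,x)\ge\underline{U}(t,x)$ on the whole cylinder, and therefore
\[
\liminf_{t\to+\infty}U(t,x)\ \ge\ \phi_l(x)\quad\text{uniformly on }[0,l].
\]

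The heart of the argument — and the main obstacle — is to show that $\phi_l\to a/b$ uniformly on compact sets of $[0,+\infty)$ as $l\to+\infty$, so that $l$ can be taken large enough to guarantee $\phi_l(x)>a/b-\varepsilon$ on $[0,l_\varepsilon]$ (and also $l>\max\{l_\varepsilon,L^{*}\}$). To handle this, I would first note that $\phi_l\le a/b$ (comparison with the constant supersolution $a/b$). Next, one shows domain monotonicity: extending $\phi_l$ by $0$ past $x=l$ and using the fact that a larger interval admits $\phi_l$ as a strict subsolution, one gets $\phi_{l_1}\le\phi_{l_2}$ on $[0,l_1]$ whenever $l_1<l_2$. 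Hence $\phi_\infty(x):=\lim_{l\to\infty}\phi_l(x)$ exists; by standard elliptic regularity and passing to the limit, $\phi_\infty$ is a bounded nonnegative solution of $d\phi''+\phi(a-b\phi)=0$ on $[0,+\infty)$ with $\phi'(0)=0$. A Liouville-type argument (for instance, multiplying by $\phi'$ to obtain a conserved energy and using boundedness) forces $\phi_\infty\equiv a/b$. Dini's theorem (monotone convergence to a continuous limit on a compact set) then upgrades this to uniform convergence on $[0,l_\varepsilon]$, completing the selection of $l$ and hence the proof.
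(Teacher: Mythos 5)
The paper does not prove Lemma \ref{le2.3}; it quotes it from Lemma A.2--A.3 of \cite{wx1} and Proposition 8.1 of \cite{w2}. Your argument — compare $U$ against the logistic Dirichlet problem on $[0,l]$, invoke Lemma \ref{le2.1}(ii) for convergence of $\underline{U}(t,\cdot)$ to the steady state $\phi_l$, and then show $\phi_l\to a/b$ locally uniformly as $l\to\infty$ via domain monotonicity, elliptic compactness, a phase-plane Liouville step, and Dini — is the standard proof underlying those cited results and is essentially correct. Two small points to tidy. First, Lemma \ref{le2.1} is stated for the nonlinearity $au(1-bu)$ whereas \eqref{2.3} uses $U(a-bU)$, so you are applying the analogous statement with carrying capacity $a/b$ rather than $1/b$; this is harmless since the length threshold $L^*=\frac{\pi}{2}\sqrt{d/a}$ depends only on the linearization at $0$, which has rate $a$ in both, but it is worth flagging. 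Second, to obtain the strict uniform lower bound on $[0,l_\varepsilon]$ you should first fix $l$ so large that $\phi_l>a/b-\varepsilon/2$ on $[0,l_\varepsilon]$, and then use the \emph{uniform} convergence $\underline{U}(t,\cdot)\to\phi_l$ in $C([0,l])$ to get $U(t,x)\geq\underline{U}(t,x)>a/b-\varepsilon$ for all $x\in[0,l_\varepsilon]$ and all $t$ sufficiently large; stated as a pointwise $\liminf\geq\phi_l$ the passage to the uniform strict conclusion needs this extra word. The Liouville step is fine as sketched: with $\phi'(0)=0$ and $0<\phi\leq a/b$, the conserved energy puts the trajectory either at the fixed point $(a/b,0)$ or on a periodic orbit around the center $(0,0)$, and the latter crosses $\phi=0$, contradicting positivity.
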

\begin{lemma}\label{le2.4}
Let M be a nonnegative constant. For any given $\varepsilon>0$ and $l_{\varepsilon}>0,$  there exists $\ds{l>\max\left\{l_{\varepsilon}, \frac{\pi}{2}\sqrt{\frac{d}{a}}\right\}}$  such that , if the continuous and non-negative function $V(t,x)$ satisfies
\begin{equation}\label{2.4}
\left\{\begin{array}{ll}
V_{t}-dV_{xx}\leq V(a-bV), \qquad &t>0, 0<x<l,\vspace{7 pt}\\
V_{x}(t,0)=0,V(t,l)\leq M, &t>0,
\end{array}\right.
\end{equation}
and if $V(0,x)>0$ in $[0,l),$  then $$ \ds{\limsup_{t\rightarrow+\infty}V(t,x)<\frac{a}{b}+\varepsilon
~\text{uniformly~on}~[0,l_{\varepsilon}].}$$
\end{lemma}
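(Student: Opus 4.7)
The plan is to dominate $V$ from above by the solution of an auxiliary logistic initial-boundary value problem whose long-time limit is a stationary profile that converges to $a/b$ on any fixed compact set as $l\to\infty$.

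First I would introduce $M^{\prime}:=\max\{M,a/b\}$ and $C_0:=\max\{\|V(0,\cdot)\|_\infty, M^{\prime}\}$, and compare $V$ with the solution $W(t,x)$ of
\[W_t - dW_{xx} = W(a-bW),\ W_x(t,0)=0,\ W(t,l)=M^{\prime},\ W(0,x)=C_0.\]
Applying the parabolic maximum principle to $Z:=V-W$, which satisfies a linear parabolic inequality together with $Z_x(t,0)\le 0$, $Z(t,l)\le 0$, and $Z(0,\cdot)\le 0$, yields $V\le W$ on $[0,\infty)\times[0,l]$. Since $C_0\ge a/b$ makes the constant $C_0$ a stationary supersolution of the logistic equation and $C_0\ge M^{\prime}$, the solution $W(t,\cdot)$ is monotone nonincreasing in $t$ and converges as $t\to\infty$ to the maximal positive solution $\phi_l$ of the elliptic problem
\[d\phi^{\prime\prime} + \phi(a-b\phi)=0\text{ on }(0,l),\quad \phi^{\prime}(0)=0,\quad \phi(l)=M^{\prime}.\]

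The core step is to show that $\phi_l\to a/b$ uniformly on compact subsets of $[0,\infty)$ as $l\to\infty$. The family $\{\phi_l\}$ is uniformly bounded by $M^{\prime}$, so interior elliptic estimates combined with a diagonal extraction produce, along any sequence $l_n\to\infty$, a subsequential $C^2$-on-compacts limit $\phi_\infty$: a bounded nonnegative solution of the ODE on $[0,\infty)$ with $\phi_\infty^{\prime}(0)=0$. The conserved energy $\frac{d}{2}(\phi^{\prime})^2+\frac{a}{2}\phi^2-\frac{b}{3}\phi^3$ then identifies the only bounded nonnegative half-line solutions with vanishing derivative at the origin as the equilibria $0$ and $a/b$. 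I would rule out $\phi_\infty\equiv 0$ by exploiting the assumption $l>L^*=(\pi/2)\sqrt{d/a}$: via Lemma \ref{le2.1}, a suitably rescaled positive eigenfunction of \eqref{2.11} serves as a positive subsolution of the elliptic problem, bounding $\phi_l$ from below on fixed compact subsets uniformly for large $l$. Hence $\phi_\infty\equiv a/b$.

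Combining everything, given $\varepsilon>0$ and $l_\varepsilon>0$, I would choose $l>\max\{l_\varepsilon,(\pi/2)\sqrt{d/a}\}$ so large that $\phi_l(x)<a/b+\varepsilon$ on $[0,l_\varepsilon]$, and obtain
\[\limsup_{t\to\infty} V(t,x) \;\le\; \lim_{t\to\infty} W(t,x) \;=\; \phi_l(x) \;<\; a/b+\varepsilon\]
uniformly on $[0,l_\varepsilon]$. The hard part will be the classification/compactness step that forces $\phi_\infty\equiv a/b$—in particular, eliminating the trivial limit $\phi_\infty\equiv 0$—which is precisely where the threshold $(\pi/2)\sqrt{d/a}$ in the hypothesis on $l$ is used.
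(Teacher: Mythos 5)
The paper does not actually prove this lemma: it is quoted from prior works (Lemma A.3 of \cite{wx1} and Proposition 8.1 of \cite{w2}), so there is no in-paper proof to compare your attempt against. Judged on its own, your argument is correct, and it is in the spirit of the standard proof in the cited references: dominate $V$ from above by the solution $W$ of a logistic IBVP whose constant, everywhere-large initial datum $C_0\ge\max\{a/b,M'\}$ makes $W(t,\cdot)$ descend monotonically to the maximal stationary profile $\phi_l$ of
$d\phi''+\phi(a-b\phi)=0$, $\phi'(0)=0$, $\phi(l)=M'$, then let $l\to\infty$, pass to a subsequential $C^2_{\mathrm{loc}}$ limit $\phi_\infty$ on the half-line, and use the conserved energy $\frac{d}{2}(\phi')^2+\frac{a}{2}\phi^2-\frac{b}{3}\phi^3$ to classify the bounded nonnegative solutions with $\phi'(0)=0$ as exactly $\phi\equiv 0$ and $\phi\equiv a/b$.

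One observation worth making: for the $\limsup$ bound you are proving, the final step (ruling out $\phi_\infty\equiv 0$ via the subsolution built from Lemma~\ref{le2.1}, which is where the threshold $l>\tfrac{\pi}{2}\sqrt{d/a}$ is used) is actually superfluous. Every subsequential limit of $\phi_l$ lies in $\{0,a/b\}$ and is therefore $\le a/b$, so $\limsup_{l\to\infty}\sup_{[0,l_\varepsilon]}\phi_l\le a/b$ regardless, which already gives $\phi_l<a/b+\varepsilon$ on $[0,l_\varepsilon]$ for $l$ large. The threshold on $l$ is genuinely needed only for the companion $\liminf$ estimate (Lemma~\ref{le2.3}); it appears in the statement here merely because the two lemmas are phrased with matching hypotheses. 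Apart from this (harmless) extra work, the only places to be careful in a fully written-out version are: (i) a boundary Schauder estimate at $x=0$ (not just interior estimates) so that $\phi_\infty'(0)=0$ survives the limit, and (ii) upgrading ``every subsequence has a sub-subsequence with limit in $\{0,a/b\}$'' to a uniform statement on $[0,l_\varepsilon]$, both routine.
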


On the contrary, we will use the following lemma, which is Proposition 3.1 of \cite{w3}, in order to discuss the vanishing case of the species.
\begin{lemma}[Proposition 3.1 in \cite{w3}]\label{le2.5}
Let $d$ and $s_{0}$ be positive constants and let $a\in\mathbb{R}$. Assume that $\omega_{0}\in C^{2}([0,s_{0}])$ satisfies \[\omega_{0}'(0)=0,~\omega_{0}(s_{0})=0\text{  and }\omega_{0}(x)>0 \text{ for all }x\in(0,s_{0}).\] Let $s\in C^{1+\frac{\theta}{2}}([0,+\infty))$ and  $\omega\in C^{\frac{1+\theta}{2},1+\theta}([0,\infty)\times[0,s(t)]),$ for some $\theta > 0.$ Assume that  $s(t)>0$ and $\omega(t,x)>0$ for all $0\leq t<\infty$ and $0<x<s(t).$ We further assume that \[\ds{\lim_{t\rightarrow+\infty}s(t)=s_{\infty}<+\infty}, \quad \ds{\lim_{t\rightarrow+\infty}s'(t)=0} \text{ and }\|\omega(t,\cdot)\|_{C^{1}[0,s(t)]}\leq \widetilde{M} \text{ for all }t>1,\] for some constant $\widetilde{M}>0$. If the functions $\omega$ and $s$ satisfy
\begin{equation}\label{2.5}
\left\{\begin{array}{ll}
\omega_{t}-d\omega_{xx} \geq \omega(a-\omega),\qquad\qquad\qquad& t>0\text{ and } 0<x<s(t)\vspace{6 pt},\\
s'(t)\geq-\mu \omega_{x}(t,s(t)), & t>0,\vspace{6 pt}\\
s(0)=s_{0},\\
\omega_{x}(t,0)=0,\quad \omega(t,s(t))=0, & t>0,\\

\omega(0,x)=\omega_{0}(x), & x\in[0,s_{0}],
\end{array}\right.
\end{equation}
then  $$\lim_{t\rightarrow+\infty}\parallel \omega(t,\cdot) \parallel_{C[0,s(t)]}=0.$$
\end{lemma}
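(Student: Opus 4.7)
The plan is to argue by contradiction and pass to a limit of time-shifted solutions on a straightened domain. Suppose that $\|\omega(t,\cdot)\|_{C[0,s(t)]}$ does not tend to $0$; then there exist $\delta>0$ and sequences $t_n\to+\infty$, $x_n\in[0,s(t_n)]$ with $\omega(t_n,x_n)\ge\delta$, and after extraction $x_n\to x_\ast\in[0,s_\infty]$. The strategy is to build, from the time shifts $\omega(t_n+\cdot,\cdot)$, a non-trivial limit solution on a fixed spatial domain, and then apply the strong maximum principle and Hopf's boundary lemma to this limit to contradict the fact that the free-boundary condition forces the boundary gradient to vanish as $t\to\infty$.

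First I would straighten the moving boundary via $y=x/s(t)$ and set $W(t,y)=\omega(t,s(t)y)$ on the fixed strip $(0,\infty)\times(0,1)$. A direct calculation shows that $W$ satisfies a uniformly parabolic equation whose coefficients depend on $d$, $s(t)$ and $s'(t)$; because $s(t)\to s_\infty>0$ and $s'(t)\to 0$, these coefficients stabilize as $t\to\infty$. I would then combine the standing $C^1$ bound $\|\omega(t,\cdot)\|_{C^1[0,s(t)]}\le\widetilde M$ with interior and boundary Schauder estimates for this rescaled problem to obtain uniform $C^{1+\theta/2,\,2+\theta}$ bounds for the shifts $W_n(\tau,y):=W(t_n+\tau,y)$. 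A diagonal extraction then produces a subsequence converging in $C^{1,2}_{\mathrm{loc}}(\mathbb{R}\times[0,1])$ to some $W_\infty\ge 0$ satisfying an autonomous limit inequality
\[
\partial_\tau W_\infty-\frac{d}{s_\infty^{\,2}}\,\partial_{yy}W_\infty\;\ge\;W_\infty(a-W_\infty),
\]
together with $\partial_y W_\infty(\tau,0)=0$, $W_\infty(\tau,1)=0$ and $W_\infty(0,x_\ast/s_\infty)\ge\delta>0$.

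By the strong maximum principle, $W_\infty>0$ on $\mathbb{R}\times[0,1)$, and Hopf's boundary lemma then forces $\partial_y W_\infty(\tau,1)<0$ for every $\tau\in\mathbb{R}$. On the other hand, from $\omega(t,s(t))=0$ and $\omega\ge 0$ inside one has $\omega_x(t,s(t))\le 0$, while the free-boundary condition $s'(t)\ge-\mu\omega_x(t,s(t))$ gives $0\le-\omega_x(t,s(t))\le s'(t)/\mu$, which tends to $0$ as $t\to\infty$. Translating back through $W$, this yields $\partial_y W_\infty(\tau,1)=0$ for all $\tau$, contradicting the Hopf inequality and proving the claim.

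The step I expect to be the main obstacle is the passage to the limit \emph{up to} the free boundary: what is needed is regularity of $\omega$ all the way to $x=s(t)$ that remains uniform under the time shifts $t\mapsto t+t_n$. This is precisely why the straightening $y=x/s(t)$ is indispensable — once the problem is posed on the fixed strip $[0,1]$, the coefficients converge uniformly, classical parabolic Schauder theory gives compactness estimates independent of $n$, and the Hopf lemma can be invoked on $W_\infty$ at the fixed right boundary $y=1$ without any geometric complication from the moving free boundary.
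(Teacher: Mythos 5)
This lemma is not proved in the paper; it is quoted verbatim from Proposition 3.1 of Wang's work \cite{w3}, so there is no ``paper's proof'' against which to check. That said, your strategy -- straighten the boundary, time-shift, pass to a limit, and derive a contradiction from the strong maximum principle and Hopf's lemma against the decay of $s'(t)$ -- is the standard blow-up argument that Du--Lin and Wang use for results of this type, and the contradiction you aim for (a strictly negative Hopf slope versus $\omega_x(t,s(t))\to0$ forced by $s'(t)\to0$) is exactly the right one. Two small remarks on the structure: you should record explicitly that $x_\ast/s_\infty<1$ (it follows at once from $\omega(t_n,s(t_n))=0$ and the $C^1$ bound, since $s(t_n)-x_n\geq\delta/\widetilde M$), as otherwise the limit mass could sit on the Dirichlet boundary and the Hopf argument would misfire; and you only need the Hopf inequality at a single $\tau$ that lies forward in time from where $W_\infty$ is positive, so the maximum-principle step can be stated more modestly.

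The genuine gap is the compactness step. You invoke interior and boundary Schauder estimates to upgrade the hypothesized $C^1$-in-$x$ bound to uniform $C^{1+\theta/2,\,2+\theta}$ bounds on the shifts $W_n$. Schauder theory gives that upgrade only for solutions of a parabolic \emph{equation} with H\"older data; here the lemma assumes merely a differential \emph{inequality} $\omega_t-d\omega_{xx}\geq\omega(a-\omega)$, which provides a one-sided bound on $\omega_t-d\omega_{xx}$ and no control from above. Indeed, the stated regularity $\omega\in C^{\frac{1+\theta}{2},\,1+\theta}$ is strictly weaker than what is needed for $\omega_t$ and $\omega_{xx}$ even to be classical, so there is nothing in the hypotheses from which to extract a $C^{1,2}_{\mathrm{loc}}$-convergent subsequence, let alone convergence of $\partial_y W_n(\tau,1)$. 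There are two standard ways to close this: either note that in the actual application $\omega$ is a component of the classical solution of the full reaction--diffusion system, so the full system (not the inequality) supplies the required higher interior and boundary H\"older bounds for the time shifts; or abandon the limit-equation route and instead construct, on a fixed sub-interval $[a_n,s(t_n)]$ where $\omega(t_n,\cdot)\geq\delta/2$, an explicit positive subsolution $v$ of the comparison equation with $v(t,s(t_n))=0$ and $v_x(t,s(t_n))\leq -c<0$ uniformly in $n$; the comparison principle then gives $\omega_x(t_n+1,s(t_n+1))\leq -c$, contradicting $\omega_x(t,s(t))\to 0$ without any Schauder estimates on the unknown $\omega$. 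As written, your argument assumes regularity that the hypotheses of the lemma do not supply.
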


To discuss the asymptotic behaviors of $u$ and  $\upsilon$ in the vanishing case, we need the following lemma.
\begin{lemma}\label{le2.8}
Let $(u, \upsilon, h(t))$ be the solution of \eqref{FBP} and recall that $\ds{h_{\infty}=\lim_{t\rightarrow +\infty}h(t)}.$ If $h_{\infty}<\infty,$ then there exists $M,$  for all $t>0,$  such that
$\|u(t,\cdot)\|_{C^{1}[0,h(t)]}\leq M$ and $\|\upsilon(t,\cdot)\|_{C^{1}[0,h(t)]}\leq M$.
Moreover, $\ds{\lim_{t\rightarrow +\infty}h'(t)=0}$.
\end{lemma}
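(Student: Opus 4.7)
The plan is a two-step argument: first establish uniform $C^{1+\theta}$ parabolic regularity via a change of variables that converts the free boundary into a fixed one, then use the integrability of $h'$ on $[0,\infty)$ together with uniform Hölder continuity to deduce $\lim_{t\to\infty}h'(t)=0$.

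First I would flatten the free boundary by setting $y=x/h(t)$ and defining $w(t,y)=u(t,h(t)y)$, $z(t,y)=\upsilon(t,h(t)y)$. A direct computation shows that $(w,z)$ solves a parabolic system on the fixed cylinder $[0,\infty)\times[0,1]$ of the form
\begin{align*}
w_t-\frac{1}{h(t)^2}w_{yy}-\frac{h'(t)\,y}{h(t)}w_y &= w(1-w)-\delta w z,\\
z_t-\frac{D}{h(t)^2}z_{yy}-\frac{h'(t)\,y}{h(t)}z_y &= \kappa z\Bigl(1-\tfrac{z}{w+\alpha}\Bigr),
\end{align*}
with $w_y(t,0)=z_y(t,0)=0$ and $w(t,1)=z(t,1)=0$. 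By hypothesis $h_0\le h(t)\le h_\infty<\infty$, and by Lemma \ref{estimates} we have $0<h'(t)\le\Lambda$, $0<w\le M_1$, $0<z\le M_2$; hence the coefficients of this transformed system are uniformly bounded and uniformly parabolic on $[0,\infty)\times[0,1]$.

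Next, applying interior-plus-boundary $L^p$ estimates for linear parabolic equations on the sliding time windows $[n,n+2]\times[0,1]$ (with Neumann data at $y=0$ and Dirichlet data at $y=1$), the $L^\infty$ bounds on $w,z$ furnish uniform $W^{1,p}_tW^{2,p}_y$ estimates independent of $n$. Choosing $p$ large and using Sobolev embedding yields a bound $\|w\|_{C^{(1+\theta)/2,1+\theta}([n+1,n+2]\times[0,1])}\le C$ and similarly for $z$, with $C$ independent of $n$. Unwinding the change of variables via $u_x(t,x)=h(t)^{-1}w_y(t,x/h(t))$ and using $h_0\le h(t)\le h_\infty$ transfers this into the desired uniform bounds $\|u(t,\cdot)\|_{C^1[0,h(t)]}\le M$ and $\|\upsilon(t,\cdot)\|_{C^1[0,h(t)]}\le M$ for all $t>0$ (the short interval $[0,1]$ is absorbed into $M$ using Lemma \ref{local.exist} and $\eqref{1.5}$).

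Finally, to conclude $h'(t)\to 0$: since $h$ is increasing and $h(t)\to h_\infty<\infty$, one has $\int_0^\infty h'(t)\,dt=h_\infty-h_0<\infty$. The uniform $C^{(1+\theta)/2,1+\theta}$ regularity obtained above, together with the Lipschitz continuity of $t\mapsto h(t)$ (guaranteed by $h'\le\Lambda$), implies that the boundary traces $t\mapsto u_x(t,h(t))$ and $t\mapsto\upsilon_x(t,h(t))$ are uniformly Hölder continuous on $[1,\infty)$. Hence $h'(t)=-\mu\bigl(u_x(t,h(t))+\rho\upsilon_x(t,h(t))\bigr)$ is uniformly continuous and nonnegative on $[1,\infty)$, and any uniformly continuous nonnegative integrable function must tend to zero, giving $\lim_{t\to\infty}h'(t)=0$.

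The principal obstacle is the global-in-time parabolic regularity near the moving boundary: the $L^p$ and Schauder theory are naturally formulated on fixed domains, so the flattening step and the verification that the resulting coefficients remain uniformly elliptic/parabolic and uniformly bounded (which crucially uses $h_\infty<\infty$ to keep $1/h(t)^2$ bounded above) is the heart of the argument.
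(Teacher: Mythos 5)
Your proposal is correct and follows essentially the same route as the cited reference (Theorem 4.1 of Wang--Zhao \cite{w2}, to which the paper defers): flatten the free boundary via $y=x/h(t)$, use the bounds $h_0\le h(t)\le h_\infty<\infty$ and $0<h'\le\Lambda$ from Lemma \ref{estimates} to make the transformed operator uniformly parabolic with uniformly bounded coefficients, run $L^p$ theory plus Sobolev embedding on sliding time windows to get uniform $C^{(1+\theta)/2,1+\theta}$ bounds, and then combine the integrability $\int_0^\infty h'\,dt=h_\infty-h_0<\infty$ with the resulting uniform Hölder continuity of $h'$ to conclude $h'(t)\to 0$. No gaps.
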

We skip the proof of the above lemma since it is  similar to that of Theorem 4.1 in \cite{w2}.

Furthermore, we need the following lemma which appears in \cite{b}  and \cite{w3} (page 893 and page 3388 respectively).
\begin{lemma}\label{le2.6}
Consider the following problem
\begin{equation}\label{2.6}
\left\{\begin{array}{ll}
\ds{\frac{\partial u}{ \partial t}=u_{xx} + u(1-u),}~\qquad \qquad \qquad&t>0, x>0,\vspace{7 pt}\\
\ds{\frac{\partial \upsilon}{\partial t}=D\upsilon_{xx}+\kappa\upsilon\left(1-\frac{\upsilon}{M_{1}+\alpha}\right),}~&t>0, x>0.
\end{array}\right.
\end{equation}
Assume that $u(t,x)=U(\xi)$ and $\upsilon(t,x)=V(\xi),$  where $~\xi=x-st $. Then \eqref{le2.6} is equivalent to
\begin{equation}\label{2.7}
\left\{\begin{array}{ll}
sU'+U''+U(1-U)=0,~\qquad&\xi \in \mathbb{R},\vspace{7 pt}\\
sV'+DV''+\kappa V\left(1-\frac{V}{M_{1}+\alpha}\right)=0,~~\qquad&\xi \in \mathbb{R},
\end{array}\right.
\end{equation}
If $s\geq s_{\rm min}=2\max \{1,\sqrt{D \kappa} \},$  then problem of \eqref{2.7} admits a solution $(U,V)$ which satisfies the conditions
\begin{equation}\label{limiting.cond}
\begin{array}{l}
U(-\infty)=1,~V(-\infty)=M_{1}+\alpha, \quad U(+\infty)=V(+\infty)=0,\vspace{6 pt}\\
U'(\xi)<0\text{ and }V'(\xi)< 0~\text{ for all }~\xi \in \mathbb{R}.
\end{array}
\end{equation}
\end{lemma}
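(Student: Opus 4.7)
The plan is to exploit the crucial structural feature of system \eqref{2.6}: it is \emph{decoupled}. The prey equation is the classical Fisher--KPP equation with diffusion $1$ and growth rate $1$, and the predator equation, after rescaling by $M_{1}+\alpha$, is again Fisher--KPP with diffusion coefficient $D$ and intrinsic growth rate $\kappa$. Each equation can therefore be analysed separately and the two resulting monotone profiles combined.

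I would first substitute the travelling wave ansatz into \eqref{2.6}, which immediately delivers the ODE system \eqref{2.7}. For the first component $U''+sU'+U(1-U)=0$, I would recast it as the planar system
\[
U' = W, \qquad W' = -sW - U(1-U),
\]
whose only equilibria are $(0,0)$ and $(1,0)$. Linearisation at $(1,0)$ yields a saddle for every $s>0$; linearisation at $(0,0)$ produces the characteristic polynomial $\lambda^{2}+s\lambda+1=0$, whose roots are real and negative precisely when $s\geq 2$. For such $s$, I would follow the branch of the unstable manifold of $(1,0)$ that enters the region $R=\{0<U<1,\ W<0\}$ and trap it inside an invariant cone inside $R$ bounded by the slower eigendirection $W=\lambda_{-}U$ at the origin. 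Since $R$ contains no further equilibria, the Poincar\'e--Bendixson trichotomy forces the trapped orbit to converge to $(0,0)$, producing a strictly decreasing profile with $U(-\infty)=1$ and $U(+\infty)=0$.

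An identical argument applied after the scaling $V\mapsto V/(M_{1}+\alpha)$ delivers the predator profile $V$; the relevant linearisation at the origin is $D\mu^{2}+s\mu+\kappa=0$, whose roots become real exactly when $s\geq 2\sqrt{D\kappa}$. Choosing $s\geq s_{\min}=2\max\{1,\sqrt{D\kappa}\}$ enforces both thresholds simultaneously, and the resulting pair $(U,V)$ meets every condition listed in \eqref{limiting.cond}.

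The main obstacle I foresee is the construction of the invariant cone and the verification of \emph{strict} monotonicity $U'<0$, $V'<0$ on all of $\mathbb{R}$, rather than merely asymptotically at the two equilibria; this is a delicate but standard planar dynamical-systems argument. A shortcut is available: because each of the two equations is already a scalar Fisher--KPP equation in standard form, I could alternatively invoke the classical existence theorems of Kolmogorov--Petrovskii--Piscunov and Aronson--Weinberger directly, which furnish monotone travelling wavefronts for every wave speed at or above the spreading speed, thereby bypassing the phase-plane construction altogether.
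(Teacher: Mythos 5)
The paper does not actually prove Lemma \ref{le2.6}: it simply states it and cites Guo--Wu and Wang (references \cite{b} and \cite{w3}) for the result. Your proposal, by contrast, supplies the underlying argument, and it is correct. You rightly identify the decisive structural feature that the statement quietly exploits, namely that the system \eqref{2.6} is \emph{decoupled}: the $u$-equation is standalone, and the $\upsilon$-equation depends only on $\upsilon$ because the coupling appears only through the fixed constant $M_{1}+\alpha$. Each component is therefore a scalar Fisher--KPP equation, and the phase-plane analysis you sketch (saddle at $(1,0)$, stable node at the origin when $s\geq 2$ resp.\ $s\geq 2\sqrt{D\kappa}$, invariant region trapping the unstable manifold, Poincar\'e--Bendixson forcing convergence to $(0,0)$) is the standard existence proof for monotone KPP fronts, and it gives exactly $s_{\min}=2\max\{1,\sqrt{D\kappa}\}$. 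Your closing remark is also the efficient route: once the decoupling is noted, one can invoke the classical Kolmogorov--Petrovskii--Piscunov or Aronson--Weinberger theorems component-wise rather than redoing the phase-plane construction; this is presumably what the cited references do as well. In short, your proof is a valid, self-contained version of a result the paper only quotes, and both rest on the same classical mechanism.
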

The following lemma will be used to give a lower estimate of the ``asymptotic spreading speed'' (when spreading occurs). The notion of spreading and spreading speed will become more clear later on.

Before we state the needed lemma, let us  first consider the following problem (which is relevant to the original problem \eqref{FBP}. It will also initiate problem \eqref{2.77}, the subject of Lemma \ref{le2.7}.)
\begin{equation}\label{2.771}
\left\{\begin{array}{ll}
\ds{\partial_{t}\underline{\upsilon}-D\partial_{xx}\underline{\upsilon}= \kappa\underline{\upsilon}\left(1-\frac{\underline{\upsilon}}{\alpha}\right),}~~\qquad\qquad\qquad &t>0,0<x<\underline{h}(t),\vspace{7 pt}\\
\partial_{x}\underline{\upsilon}(t,0)= 0,~&t>0,\vspace{7 pt}\\
\underline{\upsilon}(t,\underline{h}(t))=0,~&t>0,\vspace{7 pt}\\
\underline{h}'(t)=-\mu\rho\,\partial_{x}\underline{\upsilon}(t,\underline{h}(t)),~&t>0.
\end{array}\right.
\end{equation}
We assume that $(\underline{\upsilon},\underline{h})$ is the unique solution of \eqref{2.771} and $\underline{h}(t)\rightarrow +\infty$ as $t\rightarrow +\infty$. Setting $$\omega(t,x)=\underline{\upsilon}(t, \underline{h}(t)-x),$$  we then obtain
\begin{equation}\label{2.772}
\left\{\begin{array}{ll}
\ds{\omega_{t}-D\omega_{xx}+\underline{h}'(t) \omega_{x}= \kappa\omega(1-\frac{\omega}{\alpha}),} &\text{for all }t>0\text{ and }0<x<\underline{h}(t),\vspace{7 pt}\\
\omega_{x}(t,\underline{h}(t))= 0,~&t>0,\vspace{7 pt}\\
\omega(t,0)=0,&t>0,\vspace{7 pt}\\
\underline{h}'(t)=\mu\rho\,\omega_{x}(t,0),~&t>0.
\end{array}\right.
\end{equation}

Since $\ds{\lim_{t\rightarrow +\infty}\underline{h}(t)=+\infty},$  if $\underline{h}'(t)$ approaches a constant $s_{*}$ and $\omega(t,x)$ approaches a positive function $V(x)$ as $t\rightarrow+\infty,$  then $V(x)$ must be a positive solution of \eqref{2.77} with $s_{*}=\mu\rho V'(0)$.

We now state the lemma.
\begin{lemma}[Proposition 4.1 in \cite{dushou}]\label{le2.7}
  For any $s\geq 0,$  the following problem
\begin{equation}\label{2.77}
\left\{\begin{array}{ll}
\ds{sV'-DV''-\kappa V\left(1-\frac{V}{\alpha}\right)=0,}& x>0 ,\\
V(0)=0,
\end{array}\right.
\end{equation}
admits a unique positive solution $V=V_{s}.$ Furthermore, for each $\mu,\rho > 0,$ there exists a unique $s_{*}$ such that $\mu\rho V'_{s_{*}}(0)=s_{*}$.
\end{lemma}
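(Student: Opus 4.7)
For the first part of the lemma, the plan is to recast the second-order ODE as the planar system $V'=W$, $DW' = sW - \kappa V(1-V/\alpha)$ and analyse its flow in the phase plane. A linearisation shows that $(\alpha,0)$ is a saddle for every $s\geq 0$, while $(0,0)$ is a center when $s=0$, an unstable focus when $0<s<2\sqrt{D\kappa}$, and an unstable node when $s\geq 2\sqrt{D\kappa}$. A positive solution of \eqref{2.77} corresponds to an orbit that leaves the positive $W$-axis at $x=0$ and stays in $\{V>0\}$ for all $x>0$. By Poincar\'e--Bendixson together with the structure of these two equilibria, any bounded orbit of this kind must be attracted to $(\alpha,0)$, so uniqueness of $V_s$ will follow from the one-dimensionality of the stable manifold of this saddle.

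For existence I would use a shooting argument parameterised by $\beta := V'(0)>0$, letting $V_\beta$ solve the initial value problem with $V(0)=0$ and $V'(0)=\beta$. The key tool is the energy $E(x) := \tfrac{D}{2}V'(x)^2 + \Phi(V(x))$ with $\Phi(V) := \tfrac{\kappa}{2}V^2 - \tfrac{\kappa}{3\alpha}V^3$, which satisfies $E'(x) = s\,V'(x)^2 \geq 0$. Combined with $\Phi(\alpha)=\tfrac{\kappa\alpha^2}{6}$ and $E(0)=\tfrac{D}{2}\beta^2$, this lets me partition the admissible slopes into two open classes: a ``turn-around'' class $\mathcal B$, for which the orbit reaches a first zero of $V'$ with $V<\alpha$ and then returns to the $V$-axis (containing all sufficiently small $\beta$), and a ``blow-up'' class $\mathcal A$, for which the orbit crosses $V=\alpha$ with $V'>0$, after which $V''>0$ forces blow-up in finite distance (containing all sufficiently large $\beta$). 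Since $\mathcal A$ and $\mathcal B$ are open and disjoint, their common boundary is nonempty, and any $\beta_*$ in it yields an orbit trapped in $\{0<V<\alpha,\,V'\geq 0\}$ which, by the first paragraph, must be attracted to $(\alpha,0)$ and provides the desired $V_s$.

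For the second claim, I would set $F(s) := \mu\rho V'_s(0) - s$. Continuous dependence of the shooting problem on $s$ makes $F$ continuous. Integrating $E'(x)=s(V'_s(x))^2$ over $[0,+\infty)$, using $V_s\to\alpha$ and $V'_s\to 0$, yields the identity
\[
\tfrac{D}{2}\bigl(V'_s(0)\bigr)^2 = \tfrac{\kappa\alpha^2}{6} - s\int_0^{+\infty} \bigl(V'_s(x)\bigr)^2\,dx,
\]
so $V'_s(0)\leq \alpha\sqrt{\kappa/(3D)}$ uniformly in $s$, with equality only at $s=0$. Hence $F(0)>0$ and $F(s)\to -\infty$ as $s\to +\infty$, which by continuity produces at least one zero. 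For uniqueness I would show $s\mapsto V'_s(0)$ is strictly decreasing via a comparison argument: if $s_1<s_2$ and $V_i := V_{s_i}$, then $DV_1''-s_2V_1'+\kappa V_1(1-V_1/\alpha)=(s_1-s_2)V_1'<0$ on $(0,+\infty)$, so $V_1$ is a strict supersolution of the $s_2$-equation sharing its boundary values at $0$ and $+\infty$ with $V_2$, forcing $V_1'(0)>V_2'(0)$.

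The main obstacles are (i) in the unstable-focus regime $0<s<2\sqrt{D\kappa}$, ruling out orbits that spiral out from the origin while staying in $\{V>0\}$, which requires combining the strict monotonicity of $E$ with the geometry of the saddle's stable manifold, and (ii) making the comparison step in the previous paragraph fully rigorous, since the supersolution/subsolution relation on the half-line must deliver the strict ordering $V_1'(0)>V_2'(0)$ from boundary values prescribed only at $x=0$ and in the limit $x\to+\infty$.
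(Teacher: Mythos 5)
The paper does not prove this lemma; it simply invokes Proposition~4.1 of Du and Lin \cite{dushou}, so your proposal must be judged on its own merits. Your phase-plane/shooting framework and the energy $E=\tfrac{D}{2}(V')^2+\Phi(V)$ with $E'=s(V')^2$ are exactly the right tools and match the standard proof. However, there is a genuine gap in the existence step: the claim that the turn-around class $\mathcal B$ contains all sufficiently small $\beta$ is false precisely in the node regime $s\geq 2\sqrt{D\kappa}$. There the orbit issuing from $(0,\beta)$ leaves the origin tangent to the fast unstable eigendirection $(1,\lambda_+)$, hence lies strictly above the monotone traveling-wave orbit, which emanates from $(0,0)$ along the slow direction $(1,\lambda_-)$ and is the backward continuation of the stable manifold of the saddle $(\alpha,0)$. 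Since orbits cannot cross, every $\beta>0$ reaches $V=\alpha$ with $V'>0$ and then blows up; $\mathcal B$ is empty, the shooting argument produces nothing, and in fact no bounded positive solution with $V(0)=0$ exists for $s\geq 2\sqrt{D\kappa}$. (The lemma as transcribed in the paper is misstated: Du--Lin's Proposition~4.1 is for $s\in[0,2\sqrt{D\kappa})$, not all $s\geq 0$.)

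This defect propagates to your argument for the existence of $s_*$: the function $F(s)=\mu\rho V_s'(0)-s$ is only defined on $[0,2\sqrt{D\kappa})$, so ``$F(s)\to-\infty$ as $s\to+\infty$'' is vacuous, and your uniform bound $V_s'(0)\leq\alpha\sqrt{\kappa/(3D)}$ does not by itself guarantee that $F$ changes sign on $[0,2\sqrt{D\kappa})$ when $\mu\rho$ is large. The missing ingredient is the limit $V_s'(0)\to 0$ as $s\to 2\sqrt{D\kappa}^{-}$ (the stable manifold of the saddle collapses onto the traveling-wave orbit as $s$ increases to the threshold), which then gives $F<0$ near the right endpoint and allows the intermediate value theorem to apply. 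Your monotonicity claim for $s\mapsto V_s'(0)$ is correct and is in fact cleanest to prove in the phase plane rather than via a supersolution on the half-line: writing the orbit as a graph $p=P_s(V)$ on $(0,\alpha)$, one checks $P_{s_1}>P_{s_2}$ near $V=\alpha$ from the eigenvalues of the saddle, and the phase-plane ODE $\tfrac{dP}{dV}=\tfrac{s}{D}-\tfrac{\kappa V(1-V/\alpha)}{DP}$ then forbids a first touching point, giving $P_{s_1}>P_{s_2}$ on $(0,\alpha)$ and hence $V_{s_1}'(0)>V_{s_2}'(0)$. This sidesteps the difficulty you correctly flag about extracting a strict boundary inequality from a sub/supersolution comparison on an unbounded interval.
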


\section{The spreading-vanishing dichotomy}\label{dichotomy}
We have seen in Lemma \ref{estimates} that $h'(t)>0$ for all $t>0.$ This allows us to define \begin{equation}\label{define.hinfty}h_{\infty}:=\lim_{t\rightarrow+\infty}h(t)\text{ in }[0,+\infty)\cup \{\infty\}.\end{equation}
This will allow us to define the notions of spreading and vanishing as follows.
\begin{definition}\label{terminology}We say that the two species $u$ and $\upsilon$ {\em vanish eventually} if $h_{\infty}< \infty$ and
\[\lim_{t\rightarrow+\infty}\|u(t,\cdot)\|_{C([0,h(t)])}
=\lim_{t\rightarrow+\infty}\|\upsilon(t,\cdot)\|_{C([0,h(t)])}=0.\] We say that the two species $u$ and $\upsilon$ {\em spread successfully} if \[h_{\infty}=+\infty,~\ds{\liminf_{t\rightarrow+\infty}u(t,x)>0}\text{ and }\ds{\liminf_{t\rightarrow+\infty}\upsilon(t,x)>0}\] uniformly in any compact subset of $[0,+\infty)$.\end{definition}
\subsection{The Spreading Case}
The following theorem shows that $h_{\infty}=+\infty$ is sufficient for a successful spreading:
\begin{theorem}\label{th3.1}
Suppose that $(u,\upsilon,h(t))$ is the solution of \eqref{FBP}. If $h_{\infty}=+\infty,$ ~then we have
$$\lim_{t\rightarrow+\infty}u(t,x)=u^{*}~\text{ and }~\lim_{t\rightarrow+\infty}\upsilon(t,x)=\upsilon^{*}.$$
\end{theorem}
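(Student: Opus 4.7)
The plan is to combine the comparison principle from Lemma \ref{le2.2} with the asymptotic estimates of Lemmas \ref{le2.3} and \ref{le2.4} in a four-step cycle that produces monotone sequences of upper and lower bounds $\bar{u}_{n},\underline{u}_{n},\bar{\upsilon}_{n},\underline{\upsilon}_{n}$ converging to the interior equilibrium $(u^{*},\upsilon^{*})$. The structural feature that powers this argument is a sign monotonicity: the reaction term in the $u$-equation is decreasing in $\upsilon$ (through $-\delta u\upsilon$), while the carrying capacity $u+\alpha$ in the $\upsilon$-equation is increasing in $u$. Throughout, the assumption $h_{\infty}=+\infty$ is used in the form: for every fixed $L>0$, $h(t)>L$ eventually, so that after a time-translation $(u,\upsilon)$ satisfies its PDE on the fixed strip $[0,L]$, which is the geometric setting required by Lemmas \ref{le2.3}--\ref{le2.4} (with the boundary information $u(t,L),\upsilon(t,L)$ controlled by the a priori bounds of Lemma \ref{estimates}).

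The iteration starts from $\bar{u}_{1}:=1$. Since $u_{t}-u_{xx}\le u(1-u)$, applying Lemma \ref{le2.4} on a suitable fixed interval $[0,l]$ produces $\limsup_{t\to\infty} u(t,\cdot) \le \bar{u}_{1}$ uniformly on compact subsets of $[0,+\infty)$. Using $u\le \bar{u}_{1}+\epsilon$ on large fixed intervals eventually, the $\upsilon$-equation satisfies $\upsilon_{t}-D\upsilon_{xx}\le \kappa\upsilon(1-\upsilon/(\bar{u}_{1}+\alpha+\epsilon))$, so Lemma \ref{le2.4} yields $\limsup \upsilon \le \bar{\upsilon}_{1}:=\bar{u}_{1}+\alpha$. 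Hypothesis \textbf{(H1)} ensures $1-\delta\bar{\upsilon}_{1}=1-\delta(1+\alpha)>0$, so the bound $\upsilon\le\bar{\upsilon}_{1}+\epsilon$ together with $u_{t}-u_{xx}\ge u(1-\delta(\bar{\upsilon}_{1}+\epsilon)-u)$ and Lemma \ref{le2.3} gives $\liminf u \ge \underline{u}_{1}:=1-\delta\bar{\upsilon}_{1}$. A final application of Lemma \ref{le2.3} to $\upsilon$ using $u\ge\underline{u}_{1}-\epsilon$ produces $\liminf \upsilon \ge \underline{\upsilon}_{1}:=\underline{u}_{1}+\alpha$. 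Iterating with
\[
\bar{\upsilon}_{n}=\bar{u}_{n}+\alpha,\quad \underline{u}_{n}=1-\delta\bar{\upsilon}_{n},\quad \underline{\upsilon}_{n}=\underline{u}_{n}+\alpha,\quad \bar{u}_{n+1}=1-\delta\underline{\upsilon}_{n},
\]
one obtains $\underline{u}_{n}\le \liminf_{t\to\infty} u\le \limsup_{t\to\infty} u\le \bar{u}_{n}$ (and analogously for $\upsilon$) on every compact subset of $[0,+\infty)$.

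The recursion reduces to $\bar{u}_{n+1}=\delta^{2}\bar{u}_{n}+(1-\delta)(1-\delta\alpha)$. Since \textbf{(H1)} forces $\delta(1+\alpha)<1$ and hence $\delta<1$, the contraction factor $\delta^{2}$ is strictly less than one, and a straightforward induction shows that $\bar{u}_{n}$ is decreasing and $\underline{u}_{n}$ is increasing, both converging to the fixed point $(1-\delta\alpha)/(1+\delta)=u^{*}$. Correspondingly $\bar{\upsilon}_{n},\underline{\upsilon}_{n}\to u^{*}+\alpha=\upsilon^{*}$, which yields the desired locally uniform convergence $u(t,x)\to u^{*}$, $\upsilon(t,x)\to \upsilon^{*}$.

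The main obstacle is bookkeeping rather than conceptual. Lemmas \ref{le2.3} and \ref{le2.4} require the differential inequalities to hold on a fixed interval $[0,l]$ whose length is \emph{chosen by the lemma} in terms of the target compact set $[0,l_{\epsilon}]$, whereas the previous step only delivers bounds on compact subsets with a small additive error $\epsilon$. One must therefore nest the constants at each stage: given the target compact set at stage $n+1$, invoke the lemma to produce the required $l$, revisit stage $n$ with $l_{\epsilon}:=l$ to secure a pointwise bound on all of $[0,l]$, and wait long enough for $h(t)>l$. Ensuring that the $\epsilon$-losses accumulated across the four-step cycle can be sent to zero after $n\to\infty$ is the delicate point, and this is precisely what the contraction factor $\delta^{2}<1$ (guaranteed by \textbf{(H1)}) makes possible.
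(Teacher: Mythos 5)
Your proposal is correct and follows essentially the same strategy as the paper: an alternating application of Lemmas \ref{le2.3} and \ref{le2.4} to build nested monotone sequences of upper and lower bounds, followed by passing to the limit in the resulting recursion to conclude $\bar{u}=\underline{u}=u^{*}$. Your explicit identification of the one-step contraction factor $\delta^{2}<1$ is a nice compression of the paper's argument (which simply subtracts the two limiting equations in \eqref{3.5} and cancels $1-\delta\neq 0$), but it is the same underlying mechanism.
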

\begin{proof}We will divide the proof of this theorem into two steps.
\medskip

{\bf Step 1.} Since $h_{\infty}=+\infty,$  then for any $l_\varepsilon,$  there exists $T_1>0$ and $l_1>0$ such that $\ds{l_1>\max \left\{l_\varepsilon,\frac{\pi}{2}\right\}},$  when $t>T_1,$ and then $u$ satisfies
\begin{equation}\label{3.1}
\left\{\begin{array}{ll}
u_{t}-u_{xx}\leq u(1-u),&t>T_{1},~0<x<l_1,\vspace{7 pt}\\
u_{x}(t,0)=0,~u(t,l_1)\leq M,&t>T_{1},\vspace{7 pt}\\
u(T_{1},x)>0,&x\in [0,l_1),
\end{array}\right.
\end{equation}
where $M=\max\{M_{1},M_{2}\}$ (the constants appearing in \eqref{5.6}  and \eqref{5.7}.) Applying Lemma \ref{le2.4}, we  obtain that \[\ds{\limsup_{t\rightarrow +\infty}u(t,x)< 1+\varepsilon}\text{ uniformly in $[0,l_\varepsilon]$}.\]  Since $\varepsilon$ and $l_\varepsilon$ are arbitrary,  then $\ds{\limsup_{t\rightarrow +\infty}u(t,x)\leq 1=:\bar{u}_{1}}$ uniformly on $[0,+\infty)$.

Now let $\ds{l_2>\max\left\{l_\varepsilon,\frac{\pi}{2}\sqrt{\frac{D}{\kappa}}\right\}}.$ In view of the last conclusion, there exists $T_{2}>T_{1}$ such that $u(t,x)<\bar{u}_{1}+\varepsilon$ when $t> T_{2}$ and $0< x <l_2.$  Then $\upsilon$ satisfies
\begin{equation}\label{3.2}
\left\{\begin{array}{ll}
\ds{\upsilon_{t}-D\upsilon_{xx}\leq \kappa\upsilon\left(1-\frac{\upsilon}{\bar{u}_{1}+\varepsilon+\alpha}\right)},& t>T_{2},~0<x<l_2,\vspace{7 pt}\\
\upsilon_{x}(t,0)=0\text{ and }\upsilon(t,l_2)\leq M,& t>T_{2},\vspace{7 pt}\\
\upsilon(T_{2},x)>0, & x\in [0,l_2).
\end{array}\right.
\end{equation}
Applying Lemma \ref{le2.4} again, we get $\ds{\limsup_{t\rightarrow +\infty}\upsilon(t,x)<\bar{u}_{1} +\alpha+\varepsilon}$ uniformly on $[0,l_\varepsilon].$ The arbitrariness of $\varepsilon$ and $l_\varepsilon$  allows us to conclude that $\ds{\limsup_{t\rightarrow +\infty}\upsilon(t,x)\leq \bar{u}_{1}+\alpha=:\bar{\upsilon}_{1}},$ uniformly on $[0,+\infty)$.

Let $\ds{l_3>\max \left\{l_\varepsilon,\frac{\pi}{2}\right\}}.$ From the above conclusion, we know that there exists $T_{3}>T_{2}$ such that $\upsilon(t,x)<\bar{\upsilon}_{1}+\varepsilon$ and $u(t,x)>0$ whenver $t> T_{3}$ and $0< x <l_3.$  Then $u$ satisfies
\begin{equation}\label{3.3}
\left\{\begin{array}{ll}
u_{t}-u_{xx}\geq u(1-u)-\delta u(\bar{\upsilon}_{1}+\varepsilon),
& t>T_{3},~0<x<l_3,\vspace{7 pt}\\
u_{x}(t,0)=0,u(t,l_3)=0, & t>T_{3},\vspace{7 pt}\\
u(T_{3},x)>0,~&x\in [0,l_3).
\end{array}\right.
\end{equation}
By Lemma \ref{le2.3}, we get $\ds{\liminf_{t\rightarrow+\infty}u(t,x)>1-\delta\bar{\upsilon}_{1}-\varepsilon}$ uniformly on $[0,l_\varepsilon]$. Again using the arbitrariness of $\varepsilon$ and $l_\varepsilon,$  it follows that $\ds{\liminf_{t\rightarrow+\infty}u(t,x)\geq 1-\delta\bar{\upsilon}_{1}=:\underline{u}_{1}>0}$ because of the hypothesis {\bf (H1)}.

Let $\ds{l_4>\max\left\{l_\varepsilon,\frac{\pi}{2}\sqrt{\frac{D}{\kappa}}\right\}}$. In view of above result, then there exists $T_4>T_{3}$ such that $u(t,x)>\underline{u}_{1}-\varepsilon$  whenever $t> T_{4}$ and $0< x <l_4.$  Then $\upsilon$ satisfies
\begin{equation}\label{3.4}
\left\{\begin{array}{ll}
\ds{\upsilon_{t}-D\upsilon_{xx}\geq \kappa\upsilon\left(1-\frac{\upsilon}{\underline{u}_{1}-\varepsilon+\alpha}\right),}&t>T_{4},~0<x<l_4,\vspace{7 pt}\\
\upsilon_{x}(t,0)=0,\upsilon(t,l_4)=0, &t>T_{4},\vspace{7 pt}\\
\upsilon(T_{4},x)>0, &x\in [0,l_4).
\end{array}\right.
\end{equation}
Applying Lemma \ref{le2.3}, we have $\ds{\liminf_{t\rightarrow+\infty}\upsilon(t,x)>\underline{u}_{1}+\alpha-\varepsilon}$ uniformly on $[0,l_\varepsilon],$  and consequently (as  $\varepsilon$ and $l_\varepsilon$ are arbitrary)  we obtain $\ds{\liminf_{t\rightarrow+\infty}\upsilon(t,x)\geq\underline{u}_{1}+\alpha=:\underline{\upsilon}_{1}.}$

Now we will build a $\bar{u}_{2}$.

Denote $l_5>\max \left\{l_\varepsilon,\frac{\pi}{2}\right\}$.~By above conclusion, we know that there exists $T_{5}>T_{4}$~such that $\upsilon(t,x)>\underline{\upsilon}_{1}-\varepsilon$ ~when $t> T_{5},~0< x <l_5$, and then $u$ satisfies:
\begin{equation}\label{3.1aa}
\left\{\begin{array}{ll}
u_{t}-u_{xx}\leq u(1-u)-\delta u(\underline{\upsilon}_{1}-\varepsilon),~~~~~~~~
& t>T_{5},~0<x<l_5,\\
u_{x}(t,0)=0,u(t,l_5)=0, & t>T_{5},\\
u(T_{5},x)>0,~&x\in [0,l_5).
\end{array}\right.
\end{equation}
By Lemma \ref{le2.4}, we have $\limsup_{t\rightarrow+\infty}u(t,x)<1-\delta\bar{\upsilon}_{1}-\varepsilon$ uniformly on $[0,l_\varepsilon]$. Again using the arbitrariness of~$\varepsilon$~and~$l_\varepsilon$, it follows that $\liminf_{t\rightarrow+\infty}u(t,x)\leq 1-\delta\underline{\upsilon}_{1}=:\bar{u}_{2}>0$ uniformly on $[0,+\infty)$.

The construction of $\bar{\upsilon}_{2}$.

let $l_6>\max\left\{l_\varepsilon,\frac{\pi}{2}\sqrt{\frac{D}{\kappa}}\right\}$. In view of \eqref{3.1aa}, there exists $T_{6}>T_{5}$~such that~$u(t,x)<\bar{u}_{2}+\varepsilon$ when $t> T_{6},~0< x <l_6$, and then $\upsilon$ such that
\begin{equation}\label{3.2aa}
\left\{\begin{array}{ll}
\upsilon_{t}-D\upsilon_{xx}\leq \kappa\upsilon(1-\frac{\upsilon}{\bar{u}_{2}+\varepsilon+\alpha}),~~~~~~~& t>T_{6},~0<x<l_6,\\
\upsilon_{x}(t,0)=0,\upsilon(t,l_6)\leq M,& t>T_{6},\\
\upsilon(T_{6},x)>0, & x\in [0,l_6).
\end{array}\right.
\end{equation}
Applying Lemma \ref{le2.4}, we have~$\limsup_{t\rightarrow +\infty}\upsilon(t,x)<\bar{u}_{2} +\alpha+\varepsilon$~uniformly on~$[0,l_\varepsilon]$. Considering the arbitrariness of $\varepsilon$~and $l_\varepsilon$, we then have~$\limsup_{t\rightarrow +\infty}\upsilon(t,x)\leq \bar{u}_{2}+\alpha=:\bar{\upsilon}_{2}$, uniformly on~$[0,+\infty)$.

Furthermore, let $l_7>\max \left\{l_\varepsilon,\frac{\pi}{2}\right\}$.~By above conclusion, we know that there exists $T_{7}>T_{6}$~such that $\upsilon(t,x)<\bar{\upsilon}_{2}+\varepsilon$ and~$u(t,x)>0$~when $t> T_{7},~0< x <l_7$, and then $u$ satisfies:
\begin{equation}\label{3.3aa}
\left\{\begin{array}{ll}
u_{t}-u_{xx}\geq u(1-u)-\delta u(\bar{\upsilon}_{2}+\varepsilon),~~~~~~~~
& t>T_{7},~0<x<l_7,\\
u_{x}(t,0)=0,u(t,l_7)=0, & t>T_{7},\\
u(T_{7},x)>0,~&x\in [0,l_7).
\end{array}\right.
\end{equation}
By Lemma \ref{le2.3}, we have $\liminf_{t\rightarrow+\infty}u(t,x)>1-\delta\bar{\upsilon}_{2}-\varepsilon$ uniformly on $[0,l_\varepsilon]$. Again using the arbitrariness of~$\varepsilon$~and~$l_\varepsilon$, it follows that $\liminf_{t\rightarrow+\infty}u(t,x)\geq 1-\delta\bar{\upsilon}_{2}=:\underline{u}_{2}$.

In order to sharpen the upper and lower bounds above, we continue to use the above approach and find
$l_8>\max\left\{l_\varepsilon,\frac{\pi}{2}\sqrt{\frac{D}{\kappa}}\right\}$.~In view of above result, then there exists $T_8>T_{7}$~such that $u(t,x)>\underline{u}_{2}-\varepsilon$, when $t> T_{8},~0< x <l_8$, and then $\upsilon$ satisfies
\begin{equation}\label{3.4aa}
\left\{\begin{array}{ll}
\upsilon_{t}-D\upsilon_{xx}\geq \kappa\upsilon(1-\frac{\upsilon}{\underline{u}_{2}-\varepsilon+\alpha}),~~~~~~~~~~&t>T_{8},~0<x<l_8,\\
\upsilon_{x}(t,0)=0,\upsilon(t,l_8)=0, &t>T_{8},\\
\upsilon(T_{8},x)>0, &x\in [0,l_8).
\end{array}\right.
\end{equation}
Applying Lemma \ref{le2.3}, we have~$\liminf_{t\rightarrow+\infty}\upsilon(t,x)>\underline{u}_{1}+\alpha-\varepsilon$~uniformly on~$[0,l_\varepsilon]$, because of the arbitrariness of~$\varepsilon$~and~$l_\varepsilon$, it implies that~$\liminf_{t\rightarrow+\infty}\upsilon(t,x)\geq\underline{u}_{2}+\alpha=:\underline{\upsilon}_{2}$.

\medskip

{\bf Step 2.} Indeed, we can continue the above strategy to obtain the following sequences, whose monotonicity is a straightforward conclusion  $$\ds{\underline{u}_{1}\leq\ldots\leq\underline{u}_{i}\leq\ldots\leq \liminf_{t\rightarrow+\infty}u(t,x)
\leq\limsup_{t\rightarrow+\infty}u(t,x)\leq\ldots\leq\bar{u}_{i}
\leq\ldots\leq\bar{u}_{1},}$$
$$\ds{\underline{\upsilon}_{1}\leq\ldots\leq\underline{\upsilon}_{i}
\leq\ldots\leq\liminf_{t\rightarrow+\infty}\upsilon(t,x)
\leq\limsup_{t\rightarrow+\infty}\upsilon(t,x)
\leq\ldots\leq\bar{\upsilon}_{i}\leq\ldots\leq\bar{\upsilon}_{1},}$$
where $\ds{\underline{u}_{i}=1-\delta\bar{\upsilon}_{i}},$  $\ds{\bar{u}_{i}=1-\delta\underline{\upsilon}_{i-1},}$
 $\ds{\underline{\upsilon}_{i}=\underline{u}_{i}+\alpha}$ and  $\ds{\bar{\upsilon}_{i}=\bar{u}_{i}+\alpha}$ for $i=1,2,3,\cdots$.
 \medskip

Since the constant sequences $\{\bar{u}_{i}\}$ and $\{\bar{\upsilon}_{i}\}$ are monotone non-increasing and bounded from below, and the sequences $\{\underline{u}_{i}\}$ and $\{\underline{\upsilon}_{i}\}$ are monotone non-decreasing, and are bounded from above, the limits of these sequences exist. Let us denote their limits, as $i\rightarrow+\infty,$ by $\bar{u},$  $\bar{\upsilon},$  $\underline{u}$ and $\underline{\upsilon}$  respectively. We then have \[\bar{u}=1-\delta\underline{\upsilon},~\underline{u}=1-\delta\bar{\upsilon},~\bar{\upsilon}=\bar{u}+\alpha \text{ and }\underline{\upsilon}=\underline{u}+\alpha.\] Thus,
\begin{equation}\label{3.5}
\left\{\begin{array}{l}
\bar{u}=1-\delta(\underline{u}+\alpha),\vspace{7 pt}\\
\underline{u}=1-\delta(\bar{u}+\alpha).
\end{array}\right.
\end{equation}
From hypothesis {\bf (H1)}, we can easily conclude that $\bar{u}=\underline{u}=u^{*}$ and this implies that \[\ds{\liminf_{t\rightarrow+\infty}u(t,x)=\limsup_{t\rightarrow+\infty}u(t,x)=u^{*}}\text{ and  }\ds{\liminf_{t\rightarrow+\infty}\upsilon(t,x)=\limsup_{t\rightarrow+\infty}
  \upsilon(t,x)=\upsilon^{*}}.\]The proof of  Theorem \ref{th3.1} is now complete. \end{proof}

\subsection{The Vanishing Case}
The following theorem shows that the finiteness of $h_{\infty}$ leads both species, $u$ and $\upsilon$, to vanish.
\begin{theorem}\label{th3.2}
Let $(u,\upsilon,h(t))$ be the solution of \eqref{FBP}. If $h_{\infty}<\infty,$  then we have
$\ds{\lim_{t\rightarrow+\infty}\| u(t,\cdot)\|_{C[0,h(t)]}=0}$ and $\ds{\lim_{t\rightarrow+\infty}\| \upsilon(t,\cdot)\|_{C[0,h(t)]}=0.}$
\end{theorem}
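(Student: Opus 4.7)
\medskip
\noindent\textbf{Proof proposal for Theorem \ref{th3.2}.} The plan is to view $u$ and $\upsilon$ separately as supersolutions of single-species Stefan problems of the type treated in Lemma \ref{le2.5}, and then invoke that lemma to conclude that each component vanishes uniformly on $[0,h(t)]$. The hypothesis $h_\infty<\infty$ supplies, via Lemma \ref{le2.8}, both the uniform $C^1$ estimates $\|u(t,\cdot)\|_{C^1[0,h(t)]},\|\upsilon(t,\cdot)\|_{C^1[0,h(t)]}\le M$ for all $t>1$ and the decay $h'(t)\to 0$, which are precisely the two auxiliary hypotheses demanded by Lemma \ref{le2.5}.

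First, I would handle $u$. Using the a priori bound $\upsilon\le M_2$ from Lemma \ref{estimates}, the first equation of \eqref{FBP} gives
\[
u_t-u_{xx}=u(1-u)-\delta u\upsilon\;\ge\;u\bigl((1-\delta M_2)-u\bigr)\qquad\text{on }0<x<h(t).
\]
For the free-boundary inequality demanded by \eqref{2.5}, I would observe that $\upsilon(t,h(t))=0$ with $\upsilon>0$ in $(0,h(t))$, so Hopf's lemma yields $\upsilon_x(t,h(t))\le 0$; hence
\[
h'(t)=-\mu u_x(t,h(t))-\mu\rho\,\upsilon_x(t,h(t))\;\ge\;-\mu u_x(t,h(t)).
\]
With $d=1$, $a=1-\delta M_2\in\mathbb{R}$, $s=h$, $\omega=u$, $\omega_0=u_0$ and $s_0=h_0$, all assumptions of Lemma \ref{le2.5} are satisfied, so $\|u(t,\cdot)\|_{C[0,h(t)]}\to 0$ as $t\to\infty$.

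Next, I would do the same for $\upsilon$, the only twist being a scaling to match the exact algebraic form $\omega(a-\omega)$ in Lemma \ref{le2.5}. Since $u\ge 0$, we have $u+\alpha\ge\alpha$, so
\[
\upsilon_t-D\upsilon_{xx}=\kappa\upsilon\Bigl(1-\frac{\upsilon}{u+\alpha}\Bigr)\;\ge\;\frac{\kappa}{\alpha}\,\upsilon(\alpha-\upsilon).
\]
I would then set $\tilde{\upsilon}:=(\kappa/\alpha)\,\upsilon$, which rescales the inequality to
\[
\tilde{\upsilon}_t-D\tilde{\upsilon}_{xx}\;\ge\;\tilde{\upsilon}(\kappa-\tilde{\upsilon}),
\]
i.e.\ exactly the form of \eqref{2.5} with $d=D$ and $a=\kappa$. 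Using $u_x(t,h(t))\le 0$ (Hopf again, applied to $u$), the Stefan condition becomes
\[
h'(t)\;\ge\;-\mu\rho\,\upsilon_x(t,h(t))\;=\;-\tfrac{\mu\rho\alpha}{\kappa}\,\tilde{\upsilon}_x(t,h(t)),
\]
which fits the framework of Lemma \ref{le2.5} with the positive constant $\mu':=\mu\rho\alpha/\kappa$. Lemma \ref{le2.8} transfers unchanged to $\tilde{\upsilon}$. Applying Lemma \ref{le2.5} yields $\|\tilde{\upsilon}(t,\cdot)\|_{C[0,h(t)]}\to 0$, and undoing the scaling gives $\|\upsilon(t,\cdot)\|_{C[0,h(t)]}\to 0$, completing the argument.

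The only genuine obstacle I anticipate is the bookkeeping needed to apply Lemma \ref{le2.5}: verifying the Hopf-type sign on $u_x(t,h(t))$ and $\upsilon_x(t,h(t))$ at the free boundary, and arranging the scaling so that the quadratic term has coefficient one. Once those formalities are handled, the coupling between $u$ and $\upsilon$ is essentially dropped by cheap one-sided bounds ($\upsilon\le M_2$ for $u$, and $u\ge 0$ for $\upsilon$), so no delicate coupled analysis is required.
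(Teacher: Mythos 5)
Your proof is correct and rests on the same two ingredients as the paper's (Lemma \ref{le2.5} together with the regularity/decay facts in Lemma \ref{le2.8}), but the bookkeeping is organized differently. The paper first applies Lemma \ref{le2.5} to $\upsilon$ (using only $u+\alpha>\alpha$), obtains $\|\upsilon(t,\cdot)\|_{C[0,h(t)]}\to 0$, and then feeds that into the $u$-equation to get $u_t-u_{xx}\geq u(1-\delta\varepsilon-u)$ for $t$ large, so the two steps are genuinely sequenced. You instead decouple the two species entirely: for $u$ you use the crude a priori bound $\upsilon\leq M_2$ from Lemma \ref{estimates}, yielding $u_t-u_{xx}\geq u\bigl((1-\delta M_2)-u\bigr)$, and since Lemma \ref{le2.5} explicitly allows $a\in\mathbb{R}$ (including $a<0$), this is enough. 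Your version is if anything slightly simpler, since neither application of Lemma \ref{le2.5} depends on the other, and you are more careful about the rescaling $\tilde{\upsilon}=(\kappa/\alpha)\upsilon$ needed to bring the $\upsilon$-inequality to the exact algebraic form $\omega(a-\omega)$ that Lemma \ref{le2.5} demands — a normalization the paper passes over silently. Both arguments rely on the Hopf-lemma signs $u_x(t,h(t))\leq 0$ and $\upsilon_x(t,h(t))\leq 0$ to split the Stefan condition, and both are valid.
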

\begin{proof} Since $u(t,x)>0$ and $u_{x}(t,h(t))<0,$ ~then $\upsilon$ satisfies
\begin{equation}\label{3.6}
\left\{\begin{array}{ll}
\ds{\upsilon_{t}-D\upsilon_{xx}\geq \kappa\upsilon\left(1-\frac{\upsilon}{\alpha}\right),}&\text{for all }t>0\text{ and }0<x<h(t),\vspace{7 pt}\\
\upsilon_{x}(t,0)=0, &t>0,\vspace{7 pt}\\
\upsilon(t,h(t))=0, h'(t)\geq-\mu\rho \upsilon_{x}(t,h(t)),&t>0 \vspace{7 pt}\\
\upsilon(0,x)=\upsilon_{0}(x),  & x\in[0,h_{0}].
\end{array}\right.
\end{equation}
In view of Lemmas \ref{le2.5} and \ref{le2.8},  we have that $\ds{\lim_{t\rightarrow+\infty}
\|\upsilon(t,\cdot)\|_{C[0,h(t)]}=0}$. Hence, there exists $T>0$ such that $\upsilon(t,x)< \varepsilon$ for all $t\geq T$ and  $ 0\leq x \leq h(t),$
where $0<\varepsilon<<1$. Since $u(t,x)>0$ and $\upsilon_{x}(t,h(t))<0,$  then
\begin{equation}\label{3.7}
\left\{\begin{array}{ll}
u_{t}-u_{xx}\geq u(1-\delta\varepsilon-u),& t>T, 0<x<h(t),\vspace{7 pt}\\
u_{x}(t,0)=0, & t>T,\vspace{7 pt}\\
u(t,h(t))=0, h'(t)\geq-\mu u_{x}(t,h(t)),& t>T,\vspace{7 pt}\\
u(T,x)=u_{0}(x), & x\in[0,h_{0}].
\end{array}\right.
\end{equation}
Applying Lemmas \ref{le2.5} and \ref{le2.8}, we obtain that $\ds{\lim_{t\rightarrow+\infty}\| u(t,\cdot)\|_{C[0,h(t)]}=0}$.\end{proof}
\subsection{Sharp criteria for spreading and vanishing}
In this section, we derive some criteria governing the spreading and vanishing for the free-boundary problem \eqref{FBP}.
\begin{lemma}\label{le3.1}
If $h_{\infty}<\infty,$  then $\ds{h_{\infty}\leq\frac{\pi}{2}\min\left\{1, \sqrt{\frac{D}{\kappa}}\right\}:=h_{*}.}$ Furthermore, $h_{0}\geq h_{*}$ implies that $h_{\infty}=+\infty$.
\end{lemma}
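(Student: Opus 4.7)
The plan is to prove the first assertion by contradiction, using the vanishing result of Theorem \ref{th3.2} together with the large-domain spreading result for the logistic equation (Lemma \ref{le2.1}(ii)), and then deduce the second assertion by contraposition via strict monotonicity of $h$.

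Suppose $h_\infty<\infty$. By Theorem \ref{th3.2} both $\|u(t,\cdot)\|_{C([0,h(t)])}$ and $\|\upsilon(t,\cdot)\|_{C([0,h(t)])}$ tend to $0$. I would first establish $h_\infty\leq (\pi/2)\sqrt{D/\kappa}$. Assume for contradiction the opposite and fix $L_0$ with $(\pi/2)\sqrt{D/\kappa}<L_0<h_\infty$. Since $h$ is increasing to $h_\infty$, there is $T>0$ such that $h(t)>L_0$ for all $t>T$. Using $u\geq 0$ in the $\upsilon$-equation yields the differential inequality
\begin{equation*}
\upsilon_t-D\upsilon_{xx}\geq \kappa\upsilon\bigl(1-\upsilon/\alpha\bigr)\quad\text{on } [0,L_0]\times(T,\infty),
\end{equation*}
and $\upsilon(t,L_0)>0$ there since $L_0<h(t)$. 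I would then compare $\upsilon$ with the solution $w$ of the fixed-domain logistic problem on $[0,L_0]$ with homogeneous Neumann data at $0$ and homogeneous Dirichlet data at $L_0$, initialized at time $T$ by a smooth positive function on $[0,L_0)$ that vanishes at $L_0$ and is dominated by $\upsilon(T,\cdot)$ (such a function exists because $\upsilon(T,\cdot)>0$ on $[0,h(T))\supset[0,L_0]$). The parabolic comparison principle gives $\upsilon\geq w$ on $[0,L_0]$ for $t>T$; but $L_0>(\pi/2)\sqrt{D/\kappa}$ is exactly the threshold of Lemma \ref{le2.1}(ii) applied with $d=D$, $a=\kappa$, $b=1/\alpha$, which forces $w(t,\cdot)$ to converge in $C([0,L_0])$ to a positive equilibrium. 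This contradicts $\upsilon\to 0$.

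For the companion bound $h_\infty\leq\pi/2$, I would run an essentially identical argument on the $u$-equation. Since $\upsilon\to 0$, for any $\varepsilon>0$ one has $u_t-u_{xx}\geq u\bigl((1-\delta\varepsilon)-u\bigr)$ for $t$ large on $[0,L_0]$, and on choosing $\varepsilon$ small enough the associated critical length $(\pi/2)/\sqrt{1-\delta\varepsilon}$ falls strictly below $h_\infty$, so the same fixed-domain comparison against a Dirichlet logistic subsolution and Lemma \ref{le2.1}(ii) yield a contradiction with $u\to 0$. Combining the two bounds produces $h_\infty\leq h_*$.

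Finally, for the second assertion, $h$ is strictly increasing on $[0,\infty)$ because $h'(t)>0$ for every $t>0$ by Lemma \ref{estimates}, so $h_\infty>h_0$. If $h_0\geq h_*$, then $h_\infty>h_*$; finiteness of $h_\infty$ would then violate the first assertion, hence $h_\infty=+\infty$. The main technical point I expect to have to address carefully is the comparison set-up: I must furnish smooth initial data at time $T$ that simultaneously lies under $\upsilon(T,\cdot)$ (respectively $u(T,\cdot)$) and satisfies the artificial homogeneous Dirichlet condition at $x=L_0$, so that the fixed-domain parabolic comparison principle is legitimately applicable and delivers the nontrivial asymptotic lower bound coming from Lemma \ref{le2.1}(ii).
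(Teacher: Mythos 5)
Your proof is correct and follows essentially the same route as the paper: contradiction via Theorem~\ref{th3.2} (vanishing), comparison against a fixed-domain Dirichlet-Neumann logistic problem of supercritical length, and the sharp-length result of Lemma~\ref{le2.1}(ii), followed by the elementary monotonicity argument for the second assertion. The only differences are cosmetic: you prove both bounds $h_\infty\leq \pi/2$ and $h_\infty\leq(\pi/2)\sqrt{D/\kappa}$ unconditionally (the paper implicitly splits into cases according to which of $1$ and $\sqrt{D/\kappa}$ realizes the minimum) and you are a little more careful about choosing compatible initial data for the fixed-domain comparison (the paper simply initializes the auxiliary problem with $u(T,\cdot)$, tolerating the corner incompatibility at $x=h(T)$).
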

\begin{proof} The proof of Lemma \ref{le3.1} is essentially the same as that of Theorem 5.1 in \cite{w3}. By Theorem \ref{th3.2}, we know that if $h_{\infty}<\infty,$ then
$$\lim_{t\rightarrow+\infty}\| u(t,\cdot)\|_{C[0,h(t)]}=0,\lim_{t\rightarrow+\infty}\| \upsilon(t,\cdot)\|_{C[0,h(t)]}=0.$$
In the following, we assume that $\ds{h_{\infty}>\frac{\pi}{2}\min\left\{1, \sqrt{\frac{D}{\kappa}}\right\}}$ to get the contradiction.

First, as $\ds{h_{\infty}>\frac{\pi}{2}},$  there exists $\varepsilon>0$ such that $\ds{h_{\infty}>\frac{\pi}{2}\sqrt{\frac{1}{1-\delta\varepsilon}}}.$ For such $\varepsilon,$ there exists $T>0$ such that $\ds{h(T)>\frac{\pi}{2}\sqrt{\frac{1}{1-\delta\varepsilon}}}$ and
  $\upsilon(t,x)\leq\varepsilon,$ for $t>T$ and $x\in[0,h(T)].$ Let $\underline{u}(t,x)$ be the solution of the following problem:
\begin{equation}\label{3.8}
\left\{\begin{array}{ll}
\partial_{t}\underline{u}-\partial_{xx}\underline{u}=\underline{u}(1-\delta\varepsilon-
\underline{u}),&\text{for }t>T\text{ and }0<x<h(T),\vspace{7 pt}\\
\partial_x\underline{u}(t,0)=\underline{u}(t,h(T))=0,&t>T,\vspace{7 pt}\\
\underline{u}(T,x)=u(T,x), &0<x<h(T).
\end{array}\right.
\end{equation}
By the comparison principle, we have $\underline{u}(t, x)\leq u(t, x),$  for all $t>T$ and $0<x<h(T)$. Since $\ds{h(T)>\frac{\pi}{2}\sqrt{\frac{1}{1-\delta\varepsilon}}},$  the Proposition 3.2 of \cite{Cantrell2003} yields $\ds{\liminf_{t\rightarrow+\infty}u(t,x)\geq\liminf_{t\rightarrow+\infty}\underline{u}(t,x)>0,}$ which is a contradiction to Theorem \ref{th3.2}.

Secondly, as $\ds{h_{\infty}>\frac{\pi}{2}\sqrt{\frac{D}{\kappa}}},$ there exists $T>0$ such that $\ds{h(T)>\frac{\pi}{2}\sqrt{\frac{D}{\kappa}}}$ and $u(t,x)>0,$
 for all $t>T$ and $0<x<h(T).$ Let $\underline{\upsilon}(t, x)$ be the solution of the following equation
\begin{equation}\label{3.10}
\left\{\begin{array}{ll}
\ds{\partial_t\underline{\upsilon}-D\partial_{xx}\underline{\upsilon}= \kappa\underline{\upsilon}\left(1-\frac{\underline{\upsilon}}{\alpha}\right),}&t>T,~0<x<h(T),\vspace{7 pt}\\
\partial_x\underline{\upsilon}(t,0)=\underline{\upsilon}(t, h(T))=0, &t>T,\vspace{7 pt} \\
\underline{\upsilon}(T, x)=\upsilon(T, x), ~&0<x<h(T).
\end{array}\right.
\end{equation}
By the comparison principle, we have $\underline{\upsilon}(t, x)\leq\upsilon(t, x),$  for all $t>T$ and $0<x<h(T)$. Since $\ds{h(T)>\frac{\pi}{2}\sqrt{\frac{D}{\kappa}}},$ by the Proposition 3.2 of \cite{Cantrell2003} , we have \[\ds{\liminf_{t\rightarrow+\infty}\upsilon(t,x)\geq\liminf_{t\rightarrow+\infty}\underline{\upsilon}(t,x)>0,}\] which is a contradiction to Theorem \ref{th3.2}.
\medskip

Finally, since $h'(t)>0$ for all $t>0,$  then together with the above arguments we can  see that $h_{\infty}=+\infty$ when $\ds{h_{0}\geq\frac{\pi}{2}\min\left\{1, \sqrt{\frac{D}{\kappa}}\right\}}$. \end{proof}
\begin{lemma}\label{le3.2}
Suppose that the initial datum $h_{0}$ in problem \eqref{FBP} is such that $h_{0}<h_{*}.$ Then, there exists $\bar{\mu}>0$ depending on $u_{0}$ and $\upsilon_{0}$ such that $h_{\infty}=+\infty$ when $\mu\geq\bar{\mu}.$  More precisely, we have
 $$\bar{\mu}=\mu_1:=\frac{D}{\rho}\max\left\{1,\frac{\|\upsilon_{0}\|_{\infty}}{\alpha} \right\}\left(\frac{\pi}{2}\sqrt{\frac{D}{\kappa}}
 -h_{0}\right)\left(\int_{0}^{h_{0}}\upsilon_{0}(x) dx\right)^{-1}.$$
 Furthermore, if $\|\upsilon_{0}\|_{\infty}\leq 1+\theta$ and $\|u_0\|_{\infty}\leq 1,$  then~
 $\bar{\mu}=\min \left\{\mu_1 , \mu_2\right\},$  where \[\ds{\mu_2=\max\left\{1, \frac{\| u_{0} \|_{\infty}}{1-\delta(1+\theta)}\right\}\left(\frac{\pi}{2}-h_{0}\right)
 \left(\int_{0}^{h_{0}}u_{0}(x) dx \right)^{-1}}.\]
\end{lemma}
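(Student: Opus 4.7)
The proof proceeds by contradiction. Suppose $h_{\infty}<\infty$; by Lemma~\ref{le3.1}, $h_{\infty}\leq h_{*}=\frac{\pi}{2}\min\{1,\sqrt{D/\kappa}\}$, and by Theorem~\ref{th3.2} both species vanish. The strategy is, for each species, to construct a scalar free-boundary sub-solution whose Stefan mass balance forces its own free boundary past $h_{*}$ whenever $\mu$ is large, producing the contradiction.

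For the bound $\mu_{1}$: since $u\geq 0$ gives $\upsilon_{t}-D\upsilon_{xx}\geq \kappa\upsilon(1-\upsilon/\alpha)$ and $u_{x}(t,h(t))\leq 0$ gives $h'(t)\geq -\mu\rho\,\upsilon_{x}(t,h(t))$, the pair $(\upsilon,h)$ is a super-solution (in the sense of Lemma~\ref{le2.2}) of the scalar logistic free-boundary problem with diffusion $D$, growth rate $\kappa$, carrying capacity $\alpha$, and Stefan coefficient $\mu\rho$. To avoid the sign issue in the reaction term when $\|\upsilon_{0}\|_{\infty}>\alpha$, I introduce the \emph{pure-heat} free-boundary problem with truncated initial data:
\[
\left\{\begin{array}{l}
v_{t}=D\,v_{xx},\quad 0<x<r(t),\ t>0,\vspace{4pt}\\
v_{x}(t,0)=v(t,r(t))=0,\ t>0,\vspace{4pt}\\
r'(t)=-\mu\rho\,v_{x}(t,r(t)),\ r(0)=h_{0},\ v(0,x)=\min\{\upsilon_{0}(x),\alpha\}.
\end{array}\right.
\]
Since $v\leq\alpha$ by the maximum principle, $v_{t}-Dv_{xx}=0\leq \kappa v(1-v/\alpha)$, so $(v,r)$ serves as the sub-solution in Lemma~\ref{le2.2} (the system (\ref{2.21})), and one obtains $r(t)\leq h(t)$.

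Setting $V(t)=\int_{0}^{r(t)}v(t,x)\,dx$ and using $v(t,r(t))=0$ together with the Stefan law produces the exact conservation $\tfrac{d}{dt}\bigl[V(t)+\tfrac{D}{\mu\rho}r(t)\bigr]=0$. Because $r_{\infty}\leq h_{\infty}<\infty$, Lemma~\ref{le2.5} applied to $v$ with $a=0$ yields $v(t,\cdot)\to 0$ uniformly and hence $V(t)\to 0$; letting $t\to\infty$ gives
\[
r_{\infty}=h_{0}+\frac{\mu\rho}{D}\int_{0}^{h_{0}}\min\{\upsilon_{0},\alpha\}\,dx\;\geq\;h_{0}+\frac{\mu\rho}{D\,C}\int_{0}^{h_{0}}\upsilon_{0}\,dx,
\]
with $C=\max\{1,\|\upsilon_{0}\|_{\infty}/\alpha\}$, using the pointwise inequality $\min\{\upsilon_{0},\alpha\}\geq \upsilon_{0}/C$. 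For $\mu\geq\mu_{1}$ the right-hand side is $\geq \frac{\pi}{2}\sqrt{D/\kappa}\geq h_{*}$, contradicting $r_{\infty}\leq h_{\infty}\leq h_{*}$. Hence $h_{\infty}=+\infty$.

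For $\mu_{2}$: under $\|\upsilon_{0}\|_{\infty}\leq 1+\theta$ and $\|u_{0}\|_{\infty}\leq 1$, Lemma~\ref{estimates} gives $\upsilon\leq 1+\theta$ uniformly, whence $u_{t}-u_{xx}\geq u\bigl((1-\delta(1+\theta))-u\bigr)$, a logistic with $a:=1-\delta(1+\theta)>0$ by $(\mathbf{H1})$. The same pure-heat sub-solution scheme applied now to $u$ (initial data $\min\{u_{0},a\}$, diffusion $1$, Stefan coefficient $\mu$), combined with $h_{\infty}\leq h_{*}\leq \pi/2$, delivers $\bar{\mu}\leq \mu_{2}$; taking the smaller of the two thresholds gives $\bar{\mu}=\min\{\mu_{1},\mu_{2}\}$. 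The main obstacle is the case $\|\upsilon_{0}\|_{\infty}>\alpha$ (resp.\ $\|u_{0}\|_{\infty}>a$), where a naive logistic sub-solution has an indefinite-sign reaction term so the monotone conservation $V+\tfrac{D}{\mu\rho}r$ is lost; truncating the initial data and passing to the pure heat equation as a sub-sub-solution removes this obstruction and produces precisely the factor $C$ (resp.\ its analogue) appearing in the threshold.
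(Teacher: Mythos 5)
Your proof is correct and follows essentially the same route as the paper's: both use the comparison Lemma~\ref{le2.2} to bound $(\upsilon,h)$ (resp.\ $(u,h)$) from below by a scalar free-boundary sub-problem, and then argue that the sub-problem's front must pass $h_*$ once $\mu$ is large. The paper at that point simply cites Lemma~3.7 of Du--Lin~\cite{dushou}; you re-derive its content directly, passing one step further to a pure-heat free-boundary sub-sub-solution with truncated initial datum and reading off the threshold from the exact balance $\frac{d}{dt}\left[\int_0^{r(t)}v\,dx+\frac{D}{\mu\rho}\,r(t)\right]=0$ together with Lemma~\ref{le2.5} (with $a=0$). This is precisely the mechanism behind the cited lemma, so the argument is not conceptually new, but it has the virtue of being self-contained and of making the origin of the factor $\max\{1,\|\upsilon_0\|_\infty/\alpha\}$ transparent. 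Two small points to tighten. First, the truncated datum $\min\{\upsilon_0,\alpha\}$ is only Lipschitz, which does not meet the $C^2$ compatibility required by the local-existence theory for the auxiliary free-boundary problem; using the smooth rescaled datum $\upsilon_0/C$ with $C=\max\{1,\|\upsilon_0\|_\infty/\alpha\}$ (which is automatically $\leq\alpha$ and $\leq\upsilon_0$) avoids this and yields $r_\infty=h_0+\frac{\mu\rho}{DC}\int_0^{h_0}\upsilon_0\,dx$ as an equality rather than via a pointwise inequality. Second, you invoke Lemma~\ref{le2.5} at the scalar level, but its hypotheses ($r'(t)\to 0$ and a uniform $C^1$-bound on $v$) need a sentence of justification there; the paper's Lemma~\ref{le2.8} is stated only for the full system, so one should point to the scalar analogue in \cite{dushou} or \cite{w3}. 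Finally, the pure-heat reduction is a stylistic choice rather than a necessity: keeping the logistic sub-problem with the scaled initial datum gives the monotone balance $\frac{d}{dt}\left[\int_0^{\underline h(t)}\underline\upsilon\,dx+\frac{D}{\mu\rho}\underline h(t)\right]\geq 0$, which suffices and avoids one extra comparison.
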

\begin{proof} We consider the following problem:
 \begin{equation}\label{3.12}
\left\{\begin{array}{ll}
\ds{\partial_t\underline{\upsilon}-D\partial_{xx}\underline{\upsilon}=
\kappa\underline{\upsilon}\left(1-\frac{\underline{\upsilon}}{\alpha}\right),}
&t>0,0<x<\underline{h}(t),\vspace{7 pt}\\
\partial_x\underline{\upsilon}(t, 0)=0,&t>0,\vspace{7 pt}\\
\underline{\upsilon}(t,\underline{h}(t))=0,&t>0,\vspace{7 pt}\\
\underline{h}'(t)=-\mu\rho\underline{\upsilon}(t,\underline{h}(t)),&t>0,\vspace{7 pt}\\
\underline{\upsilon}(0,x)=\upsilon_{0}(x),&0\leq x \leq h_{0},\vspace{7 pt}\\
h_{0}=\underline{h}(0), &t=0.
\end{array}\right.
\end{equation}
By Lemma \ref{le2.2}, we have $\underline{h}(t)\leq h(t)$ and  $ \underline{\upsilon}(t, x)\leq\upsilon(t,x),$ for $t>0$ and  $0< x <\underline{h}(t)$.
Using Lemma 3.7 of \cite{dushou}, if $\ds{\underline{h}(0)=h_{0}<h_{*}\leq \frac{\pi}{2}\sqrt{\frac{D}{\kappa}}}$ and $\mu\geq\bar{\mu},$ we have $\underline{h}(\infty)=+\infty.$ It then follows that $h_{\infty}=+\infty$.

Suppose now that $\|\upsilon_{0}\|_{\infty}\leq 1+\theta$ and $\|u_{0}\|_{\infty}\leq 1.$  That is $M_2=1+\theta.$  We consider the following problem
\begin{equation}\label{3.13}
\left\{\begin{array}{ll}
\ds{\partial_t\underline{u}-\partial_{xx}\underline{u}=\underline{u}(1-\delta(1+\theta)-\underline{u}),}
&t>0,0<x<\underline{h}(t),\vspace{7 pt}\\
\partial_x\underline{u}(t,0)=0,&t>0,\vspace{7 pt}\\
\underline{u}(t,\underline{h}(t))=0,&t>0,\vspace{7 pt}\\
\underline{h}'(t)=-\mu\underline{u}(t,\underline{h}(t)),&t>0,\vspace{7 pt}\\
\underline{u}(0,x)=u_{0}(x), &0\leq x \leq \underline{h}(0),\vspace{7 pt}\\
\underline{h}(0)=h_{0}, &t=0.
\end{array}\right.
\end{equation}
From Lemma 3.7 of \cite{dushou}, we know that $\ds{\underline{h}(0)=h_{0}<h_{*}\leq \frac{\pi}{2}}$ and $\mu\geq \mu_2$, which imply that $\underline{h}(\infty)=+\infty$. Thus $\mu\geq \min \{\mu_1 , \mu_2\}$ implies that $\underline{h}(\infty)=+\infty$. Therefore, we have $h_{\infty}=+\infty$ when $\mu\geq \min \{\mu_1 , \mu_2\}$. \end{proof}

\begin{lemma}\label{le3.3}
 Suppose that the initial datum $h_{0},$ in problem \eqref{FBP}, is such that $h_{0}<h_{*}.$ Then, there exists $\underline{\mu}>0$ depending on $u_{0}(x)$ and $\upsilon_{0}(x)$ such that $h_{\infty}<\infty$ when $\mu\leq \underline{\mu}$.
\end{lemma}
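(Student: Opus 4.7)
The plan is to construct, for all sufficiently small $\mu>0$, an upper solution $(\bar u,\bar\upsilon,\bar h)$ of \eqref{FBP} whose free boundary $\bar h(t)$ stays bounded above by some value strictly less than $h_*$. Comparison via Lemma~\ref{le2.2} will then give $h(t)\leq \bar h(t)$ for all $t\geq 0$, so $h_\infty\leq\bar h(\infty)<\infty$. The strategy mirrors that used for single-species free boundary problems in \cite{dushou} and \cite{w3}; the new ingredient here is handling the coupling through the Stefan condition $h'(t)=-\mu(u_x+\rho\upsilon_x)$, which forces us to control the normal derivatives of $\bar u$ and $\bar\upsilon$ simultaneously.

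Since $h_0<h_*$, first fix $\delta>0$ small enough that $(1+\delta)h_0<h_*$, and set
\[\bar h(t):=h_0\bigl(1+\delta(1-e^{-\gamma t})\bigr),\qquad V(y):=\cos(\pi y/2),\]
with $\gamma>0$ to be chosen small. Define the ansatz
\[\bar u(t,x):=A\,e^{-\gamma t}V(x/\bar h(t)),\qquad \bar\upsilon(t,x):=B\,e^{-\gamma t}V(x/\bar h(t)),\]
where $A,B>0$ are constants. Concavity of sine yields $V(y)\geq 1-y$ on $[0,1]$; combined with $u_0,\upsilon_0\in C^2$ and $u_0(h_0)=\upsilon_0(h_0)=0$, the choices $A\geq h_0\|u_0'\|_\infty$ and $B\geq h_0\|\upsilon_0'\|_\infty$ force $\bar u(0,\cdot)\geq u_0$ and $\bar\upsilon(0,\cdot)\geq\upsilon_0$ on $[0,h_0]$. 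Taking the bounds $M_1,M_2$ in Lemma~\ref{le2.2} as $\max\{A,\max\{1,\|u_0\|_\infty\}\}$ and $\max\{B,\max\{M_1+\alpha,\|\upsilon_0\|_\infty\}\}$ covers both the ansatz and the actual solution's a priori bounds from Lemma~\ref{estimates}.

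Using $V''=-(\pi/2)^2V$, $V'\leq 0$, and $\bar h'\geq 0$, direct differentiation yields
\[\bar u_t-\bar u_{xx}\geq \Bigl(-\gamma+\frac{(\pi/2)^2}{\bar h(t)^2}\Bigr)\bar u,\qquad \bar\upsilon_t-D\bar\upsilon_{xx}\geq \Bigl(-\gamma+\frac{D(\pi/2)^2}{\bar h(t)^2}\Bigr)\bar\upsilon.\]
Since $\bar h(t)\leq (1+\delta)h_0<h_*\leq \frac{\pi}{2}\min\{1,\sqrt{D/\kappa}\}$, both parenthesised coefficients can be made to exceed $1$ and $\kappa$ respectively by taking $\gamma$ small; this gives the required PDE inequalities of Lemma~\ref{le2.2} via $1-\bar u\leq 1$ and $\kappa(1-\bar\upsilon/(M_1+\alpha))\leq \kappa$. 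The Neumann conditions at $x=0$ and the Dirichlet conditions at $x=\bar h(t)$ follow from $V'(0)=V(1)=0$.

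It remains to verify the Stefan-type condition. Since $\bar h'(t)=h_0\delta\gamma e^{-\gamma t}$ while $-\bar u_x(t,\bar h(t))=\pi A\,e^{-\gamma t}/(2\bar h(t))$ and $-\bar\upsilon_x(t,\bar h(t))=\pi B\,e^{-\gamma t}/(2\bar h(t))$, the inequality $\bar h'(t)\geq -\mu\bigl(\bar u_x(t,\bar h(t))+\rho\,\bar\upsilon_x(t,\bar h(t))\bigr)$ reduces, after using $\bar h(t)\geq h_0$, to
\[\mu\leq \underline{\mu}:=\frac{2h_0^{\,2}\delta\gamma}{\pi(A+\rho B)}.\]
For any such $\mu$, Lemma~\ref{le2.2} delivers $h(t)\leq\bar h(t)\leq(1+\delta)h_0<h_*$, so $h_\infty<\infty$. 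The main obstacle is the simultaneous comparison of initial data at $x=h_0$: both $u_0,\upsilon_0$ and our ansatz vanish there, so $\bar u(0,\cdot)\geq u_0$ is genuinely a condition on first-order behavior, which is precisely what the elementary bound $V(y)\geq 1-y$ together with the $C^2$-regularity of the initial data delivers. Once $\delta,\gamma,A,B$ are chosen in this order, the threshold $\underline{\mu}$ displayed above closes the argument.
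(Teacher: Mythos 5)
Your proposal is correct and follows essentially the same route as the paper: the same super-solution ansatz $Ce^{-\beta t}\cos\bigl(\pi x/(2\bar h(t))\bigr)$ with $\bar h(t)$ increasing to a limit below $h_*$, the same use of $\bar h(t)<h_*$ to absorb the reaction terms, and the same Stefan-condition computation giving an explicit threshold $\underline{\mu}$. The only cosmetic differences are that the paper takes a single amplitude $\widetilde M$ for both components and starts $\bar h(0)$ strictly above $h_0$ (so $\cos$ stays bounded away from zero on $[0,h_0]$, whence $\bar u(0,\cdot)\geq\|u_0\|_\infty\geq u_0$), whereas you keep $\bar h(0)=h_0$ and instead use concavity, $\cos(\pi y/2)\geq 1-y$, together with the $C^2$ regularity of $u_0,\upsilon_0$ to compare the initial data near the vanishing endpoint; both choices are valid.
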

\begin{proof}  We adopt the same method used to prove Lemma 5.2 of \cite{w3}, Lemma 3.8 of \cite{dushou} and Corollary 1 of \cite{b}. Let  $\ds{\varepsilon=\frac{1}{2}\left(\frac{h_{*}}{h_{0}}-1\right)>0}$  since $h_{0}<h_{*}$. Define
$$\bar{h}(t)=h_{0}(1+\varepsilon-\frac{\varepsilon}{2}e^{-\beta t})~~\text{ for }~t\geq 0$$
$$V(y)=\ds{\cos\frac{\pi y}{2}}~~ \text{ for }~0\leq y \leq 1;$$
and
$$\bar{u}(t,x)=\bar{\upsilon}(t,x)=\widetilde{M}e^{-\beta t}V\left(\frac{x}{\bar{h}(t)}\right) ~~\text{ for }~~0\leq x \leq \bar{h}(t),$$
where
$\ds{\beta=\frac{1}{2} \min\left\{\left(\frac{\pi}{2}\right)^{2}\frac{D}{(1+\varepsilon)^{2}h_{0}^{2}}
-\kappa,\left(\frac{\pi}{2}\right)^{2}\frac{1}{(1+\varepsilon)^{2}h_{0}^{2}}-1 \right\}>0},$ as $h_{0}(1+\varepsilon)<h_{*}$ and $$\ds{\widetilde{M}=\frac{\max\left\{\|u_{0}\|_{\infty},\| \upsilon_{0}\|_{\infty}\right\}}{\ds{\cos\left(\frac{\pi}{2+\varepsilon}\right)}}.}$$

If $\ds{\mu\leq \underline{\mu}=\frac{\varepsilon h_{0}^{2}\beta (2+\varepsilon)}{2 (1+\rho)\pi \widetilde{M}}}$\,, then a direct computation yields
\begin{equation}\label{3.14}
\left\{\begin{array}{ll}
\bar{u}_{t}-\bar{u}_{xx}-\bar{u}(1-\bar{u})\geq \widetilde{M}e^{-\beta t}V((\frac{\pi}{2})^{2}\frac{1}{(1+\varepsilon)^{2}h_{0}^{2}}-1-\beta)\geq0,~&t>0,~0<x<\bar{h}(t),\vspace{7 pt}\\
\ds{\bar{\upsilon}_{t}-D\bar{\upsilon}_{xx}-\kappa\bar{\upsilon}\left(1-\frac{\bar{\upsilon}}{M_{1}+\alpha}\right)\geq \widetilde{M}e^{-\beta t}V((\frac{\pi}{2})^{2}\frac{D}{(1+\varepsilon)^{2}h_{0}^{2}}-\kappa-\beta)\geq 0,}&t>0,~0<x<\bar{h}(t),\vspace{7 pt}\\
\bar{u}_{x}(t,0)=\bar{\upsilon}_{x}(t,0)=0,~&t>0,\vspace{7 pt}\\
\bar{u}(t,\bar{h}(t))=\bar{\upsilon}(t,\bar{h}(t))=0,~&t>0,\vspace{7 pt}\\
\ds{\bar{h}'(t)+\mu[\bar{u}_{x}(t,\bar{h}(t))+\rho\bar{\upsilon}_{x}(t,\bar{h}(t))]\geq \frac{\varepsilon h_{0} \beta e^{-\beta t}}{2}\left(1-\frac{2\mu (1+\rho)\pi \widetilde{M}}{\varepsilon h_{0}^{2}\beta (2+\varepsilon)}\right)\geq 0,}~&t>0.
\end{array}\right.
\end{equation}
  Since $h_{0}\leq \bar{h}(0),$ $\bar{u}(0,x)\geq u_{0}(x)$ and $\bar{\upsilon}(0,x)\geq \upsilon_{0}(x)$ for all $x\in[0,h_{0}],$ then  Lemma \ref{le2.2}  yields that $h(t)\leq \bar{h}(t)$ \text{ on } $[0,+\infty)$. Taking $t\rightarrow +\infty,$ we obtain  $$h_\infty\leq \bar{h}(\infty)=h_{0}(1+\delta)<h_{*}.$$ This, together with Lemma \ref{le3.1}, complete the proof. \end{proof}

 Lemmas \ref{le3.1} and \ref{le3.3} lead to {\bf other criteria} for spreading and vanishing, in terms of the parameter $D,$ when  $h_{0}$ is fixed.
\begin{lemma}\label{le3.4}
For a fixed $h_{0}>0,$ let $D^{*}=\ds{\frac{4\kappa h^{2}_{0}}{\pi^{2}}}.$  Then,
\begin{enumerate}[\rm (i)]
 \item if $0 < D\leq D^{*} ,$  spreading occurs (see Definition \ref{terminology}). \\
 \item Suppose that $D^{*}< D \leq \kappa$. If $\mu\geq\bar{\mu},$ then the spreading occurs. If $\mu\leq \underline{\mu},$  then  vanishing occurs (see Definition \ref{terminology}).
 \end{enumerate}
\end{lemma}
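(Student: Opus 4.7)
The plan is to reduce both statements to the already-established dichotomy criteria by translating the hypotheses on $D$ into the relation between $h_0$ and the threshold $h_{*}=\frac{\pi}{2}\min\{1,\sqrt{D/\kappa}\}$ from Lemma \ref{le3.1}. Once that translation is done, parts (i) and (ii) follow, respectively, from Lemma \ref{le3.1} and from the pair Lemmas \ref{le3.2}--\ref{le3.3}, with no new PDE analysis required.

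For part (i), I would first rewrite $0<D\le D^{*}=\frac{4\kappa h_{0}^{2}}{\pi^{2}}$ as $\sqrt{D/\kappa}\le \frac{2h_{0}}{\pi}$, i.e.\ $\frac{\pi}{2}\sqrt{D/\kappa}\le h_{0}$. Hence
\[
h_{*}=\frac{\pi}{2}\min\Bigl\{1,\sqrt{\tfrac{D}{\kappa}}\Bigr\}\le \frac{\pi}{2}\sqrt{\tfrac{D}{\kappa}}\le h_{0}.
\]
The second assertion of Lemma \ref{le3.1} then gives $h_{\infty}=+\infty$, and Theorem \ref{th3.1} upgrades this to $u\to u^{*}$, $\upsilon\to\upsilon^{*}$, which is exactly spreading in the sense of Definition \ref{terminology}.

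For part (ii), the hypothesis $D^{*}<D\le\kappa$ has two consequences to check in sequence. The inequality $D\le\kappa$ gives $\sqrt{D/\kappa}\le 1$, so $h_{*}=\frac{\pi}{2}\sqrt{D/\kappa}$; and $D>D^{*}$ rearranges, exactly as above, to $h_{0}<\frac{\pi}{2}\sqrt{D/\kappa}=h_{*}$. With $h_{0}<h_{*}$ in hand, the spreading conclusion for $\mu\ge\bar{\mu}$ is immediate from Lemma \ref{le3.2}, and the vanishing conclusion for $\mu\le\underline{\mu}$ is immediate from Lemma \ref{le3.3} (whose conclusion $h_{\infty}<\infty$ combines with Theorem \ref{th3.2} to give $\|u(t,\cdot)\|_{C[0,h(t)]}\to 0$ and $\|\upsilon(t,\cdot)\|_{C[0,h(t)]}\to 0$).

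There is essentially no obstacle here beyond bookkeeping: the only place where one has to be slightly careful is in identifying which branch of the minimum defines $h_{*}$ under the assumption $D\le\kappa$, so that the simple algebraic equivalence $D>D^{*}\Longleftrightarrow h_{0}<h_{*}$ actually holds. Once that is in place, the proof is a one-line invocation of the earlier lemmas for each case.
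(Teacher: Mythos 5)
Your proof is correct and matches the route the paper intends: the paper gives no explicit proof of Lemma \ref{le3.4}, only the remark that it follows from Lemmas \ref{le3.1}--\ref{le3.3}, and your argument supplies precisely the algebraic translation $0<D\le D^{*}\Leftrightarrow h_{0}\ge \frac{\pi}{2}\sqrt{D/\kappa}\ge h_{*}$ for part (i) and $D^{*}<D\le\kappa\Leftrightarrow h_{0}<h_{*}=\frac{\pi}{2}\sqrt{D/\kappa}$ for part (ii), then invokes Lemma \ref{le3.1} (resp.\ Lemmas \ref{le3.2} and \ref{le3.3}) together with Theorems \ref{th3.1} and \ref{th3.2} to conclude.
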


\section{Spreading speed}\label{spreading.estimate}
In this section, we derive upper and lower bounds for the spreading speed under the free boundary conditions stated in \eqref{FBP}. The estimates are given in terms of well-known parameters.
\begin{theorem}\label{spreading.speed}
Let $(u,\upsilon,h)$ be the solution of problem \eqref{FBP} with $h_{\infty}=\infty$ and recall that  $$s_{\rm min}=2 \max \left\{1,\sqrt{D\kappa}\right\}.$$ Then,
$$\ds{s_{*}\leq\liminf_{t\rightarrow +\infty}\frac{h(t)}{t}\leq\limsup_{t\rightarrow +\infty}\frac{h(t)}{t}\leq s_{\rm min},}$$
where $s_{*}$ is the constant appearing  in Lemma \ref{le2.7}.
\end{theorem}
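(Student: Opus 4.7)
I establish the two inequalities separately. The upper bound $\ds{\limsup_{t\to\infty} h(t)/t \leq s_{\rm min}}$ will follow from a traveling wave super-solution supplied by Lemma~\ref{le2.6}, while the lower bound $\ds{\liminf_{t\to\infty} h(t)/t \geq s_*}$ will be obtained by comparing $(\upsilon, h)$ with the scalar predator-only free boundary problem~\eqref{2.771}, whose asymptotic speed is $s_*$ by Lemma~\ref{le2.7}.

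\textbf{Upper bound.} Fix $s > s_{\rm min}$ and let $(U_s, V_s)$ denote the monotone traveling wave profile from Lemma~\ref{le2.6} with limiting conditions~\eqref{limiting.cond}. Using $u \leq M_1$ and $\upsilon \leq M_2$ from Lemma~\ref{estimates} together with the monotonicity of $\kappa\upsilon\bigl(1 - \frac{\upsilon}{u+\alpha}\bigr)$ in $u$, the solution $(u, \upsilon)$ satisfies
\[
u_t - u_{xx} \leq u(1-u) \quad\text{and}\quad \upsilon_t - D\upsilon_{xx} \leq \kappa\upsilon\left(1 - \frac{\upsilon}{M_1 + \alpha}\right).
\]
Choose $K$ so large that $U_s(x - K) \geq u_0(x)$ and $V_s(x - K) \geq \upsilon_0(x)$ on $[0, h_0]$, which is possible since $U_s(-\infty) = 1$ and $V_s(-\infty) = M_1 + \alpha$ dominate the initial data. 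Define $\bar u(t, x) := U_s(x - st - K)$ and $\bar \upsilon(t, x) := V_s(x - st - K)$ on $[0,+\infty)\times [0,+\infty)$. The pair $(\bar u, \bar \upsilon)$ satisfies the two reaction-diffusion equations with equality, has $\bar u_x, \bar\upsilon_x \leq 0$ at $x=0$ by monotonicity of the profiles, and dominates the original solution at $t = 0$. Extending $(u, \upsilon)$ by zero past $x = h(t)$ and invoking a parabolic comparison argument on the half-line, one obtains $u \leq \bar u$ and $\upsilon \leq \bar\upsilon$ on $\{0 \leq x \leq h(t)\}$. A Hopf-type estimate near $x = h(t)$ then bounds $|u_x(t, h(t))|$ and $|\upsilon_x(t, h(t))|$ from above by quantities controlled by $(\bar u, \bar\upsilon)$, which decay to zero as $h(t) - st \to +\infty$. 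Inserted into the Stefan condition, this forces $h(t) - st - K$ to remain bounded, so $\ds{\limsup_{t\to\infty} h(t)/t \leq s}$. Sending $s \downarrow s_{\rm min}$ yields the upper bound.

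\textbf{Lower bound.} Let $(\underline\upsilon, \underline h)$ be the unique solution of~\eqref{2.771} with initial data $\underline\upsilon(0,\cdot) = \upsilon_0$ and $\underline h(0) = h_0$. Since $u \geq 0$,
\[
\upsilon_t - D\upsilon_{xx} = \kappa\upsilon\left(1 - \frac{\upsilon}{u + \alpha}\right) \geq \kappa\upsilon\left(1 - \frac{\upsilon}{\alpha}\right),
\]
and the Stefan condition gives $h'(t) = -\mu u_x(t, h(t)) - \mu\rho\upsilon_x(t, h(t)) \geq -\mu\rho\upsilon_x(t, h(t))$ because $u_x(t, h(t)) \leq 0$. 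The single-species version of Lemma~\ref{le2.2}, specifically~\eqref{2.21}, then yields $\underline h(t) \leq h(t)$ and $\underline\upsilon(t, x) \leq \upsilon(t, x)$ on $[0, \underline h(t)]$. Because $h_\infty = \infty$, I can pick $T$ with $h(T) > \frac{\pi}{2}\sqrt{D/\kappa}$; restarting the comparison at $t = T$ with $\underline h(T) = h(T)$ places the auxiliary problem into its spreading regime, whence $\underline h(\infty) = \infty$. The transformation $\omega(t, x) = \underline\upsilon(t, \underline h(t) - x)$ from~\eqref{2.772}, together with Lemma~\ref{le2.7}, then implies $\underline h'(t) \to s_*$ as $t \to \infty$, and hence $\underline h(t)/t \to s_*$. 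Therefore $\ds{\liminf_{t\to\infty} h(t)/t \geq s_*}$.

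\textbf{Main obstacle.} The delicate step is the upper bound: the traveling wave profiles $U_s, V_s$ do not vanish at any finite argument, so one cannot directly apply Lemma~\ref{le2.2} with $\bar h(t) = st + K$ as a classical super-solution. The comparison must be set up on the half-line $[0, +\infty)$ after extending $(u, \upsilon)$ by zero past $h(t)$; a Hopf-type boundary estimate is then required to convert the decay of the super-solution's gradient into a bound on $h'(t)$ through the Stefan condition. A secondary technical point is verifying that the zero-extension of $(u, \upsilon)$ is a valid distributional sub-solution despite the gradient jump at the moving interface, which is standard but must be spelled out carefully.
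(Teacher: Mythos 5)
Your lower-bound argument is essentially the paper's (compare $(\upsilon,h)$ against the single-species predator free boundary problem and invoke Du--Lin's Theorem 4.2), and your suggestion to restart the comparison at a time $T$ with $h(T)>\frac{\pi}{2}\sqrt{D/\kappa}$ is actually a useful refinement: the paper tacitly assumes $\underline h_\infty=\infty$ for the auxiliary problem, which does not follow from $h_\infty=\infty$ alone, so your restart closes a small gap.

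The upper bound, however, takes a genuinely different route, and it has a real gap at the step you flag as the ``main obstacle.'' The pointwise comparison $u\leq\bar u$, $\upsilon\leq\bar\upsilon$ after zero-extension does not by itself control the free-boundary gradient: since $\bar u(t,h(t))=U_s(h(t)-st-K)>0=u(t,h(t))$, the difference quotient of $u$ at $x=h(t)$ is not bounded by that of $\bar u$ (the sign of the comparison works against you there). To turn the smallness of $\bar u$ near $h(t)$ into a bound on $|u_x(t,h(t))|$ you would need a quantitative parabolic boundary regularity estimate on a moving domain (exploiting $0<h'\leq\Lambda$ to fix the geometry of the parabolic cylinders, controlling the right-hand side $u(1-u)-\delta u\upsilon$ by the $L^\infty$-smallness of $u$ in the cylinder, etc.), followed by an ODE argument showing that once $h(t)-st-K$ exceeds a threshold the resulting bound on $h'$ drops below $s$. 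None of this is spelled out, and ``Hopf-type estimate'' is a misnomer here, since Hopf's lemma gives lower bounds on normal derivatives, not upper bounds. The paper avoids all of this with a cleaner device: it sets $\bar u(t,x)=lU(x-s_{\rm min}t)-lU(\sigma_0)$ and $\bar\upsilon(t,x)=gV(x-s_{\rm min}t)-gV(\sigma_0)$ with $l,g\gg 1$ and a shift $\sigma_0$ chosen so that conditions \eqref{4.01}--\eqref{4.04} hold. The subtraction forces $\bar u(t,\sigma(t))=\bar\upsilon(t,\sigma(t))=0$ along the line $\sigma(t)=\sigma_0+s_{\rm min}t$, the scaling by $l,g$ is what makes the supersolution inequalities $\bar u_t-\bar u_{xx}\geq\bar u(1-\bar u)$ and its analogue for $\bar\upsilon$ reduce to nonnegativity of an explicit completed square (this is where \eqref{4.03} is used), and \eqref{4.04} supplies the Stefan inequality $\sigma'(t)\geq-\mu(\bar u_x+\rho\bar\upsilon_x)$ at $x=\sigma(t)$. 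Lemma \ref{le2.2} then gives $h(t)\leq\sigma(t)$ directly, with no extension or boundary regularity argument needed. If you want to salvage your route, you would have to supply the boundary Schauder/$L^p$ estimate and the subsequent ODE confinement argument in full; as written, the upper bound is not proved.
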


  \begin{proof}[Proof of Theorem \ref{spreading.speed}] First we will prove $\ds{\limsup_{t\rightarrow +\infty}\frac{h(t)}{t}\leq s_{\rm min}}.$
 From Lemma \ref{le2.6}, we know that $(U(\xi), V(\xi))\rightarrow (0, 0)$ and $(U'(\xi), V'(\xi))\rightarrow (0, 0)$ as $\xi\rightarrow+\infty$. Then,   we can choose $l$ and  $g\gg 1$ such that
 \begin{equation}\label{4.01}
 l U(\xi)\geq \| u_{0}\|_{\infty},~~g V(\xi)\geq \| \upsilon_{0}\|_{\infty} \text{ for all }\xi\in[0,h_{0}].
 \end{equation}
Moreover, there exists $\sigma_{0} > h_{0}$ depending on $D, \kappa, \mu, \rho$ such that
\begin{equation}\label{4.02}
U(\sigma_{0})< \min_{0\leq x\leq h_{0}}\left(U(x)-\frac{u_{0}(x)}{l}\right),~~~~V(\sigma_{0})< \min_{0\leq x\leq h_{0}}\left(V(x)-\frac{\upsilon_{0}(x)}{g}\right),
 \end{equation}
 \begin{equation}\label{4.03}
U(\sigma_{0})\leq 1-\frac{1}{l},~~~V(\sigma_{0})\leq\left (1-\frac{1}{g}\right)(M_{1}+\alpha),
 \end{equation}
 and
 \begin{equation}\label{4.04}
-\mu(lU'(\sigma_{0})+g\rho V'(\sigma_{0}))< s_{\rm min}.
 \end{equation}

 \noindent Now let $\sigma(t)=\sigma_{0}+s_{\rm min}t~$ for $~t\geq 0,$ \[\bar{u}=lU(x-s_{\rm min}t)-lU(\sigma_{0})~\text{ and }~ \bar{\upsilon}=gV(x-s_{\rm min}t)-gV(\sigma_{0})\text{ for }t\geq 0 \text{ and }  0\leq x\leq \sigma(t).\]  It is obvious from \eqref{4.02} and \eqref{4.04} that
  $$\bar{u}(0,x)>u_{0}(x),~\bar{\upsilon}(0,x)>\upsilon_{0}(x),~~\text{for}~~0\leq x\leq h_{0};$$
  and
  $$\sigma'(t)=s_{\rm min}> -\mu(\bar{u}_{x}(t,\sigma(t))+\rho\bar{\upsilon}_{x}(t,\sigma(t))).$$
Moreover, $$\bar{u}(t,\sigma(t))=\bar{\upsilon}(t,\sigma(t))=0~~\text{for all}~t\geq 0;$$
  $$\bar{u}_{x}(t,0)<0,~\bar{\upsilon}_{x}(t,0)< 0~~\text{for all}~t\geq 0 ~\text{ (by~Lemma~\ref{le2.6}).}$$
Then by a  calculation, we obtain from \eqref{4.03} that
\begin{align*}
  \bar{u}_{t}-\bar{u}_{xx}-\bar{u}(1-\bar{u}) &=l\left[(l-1)\left(U-\frac{lU(\sigma_{0})}{l-1}\right)^{2}+U(\sigma_{0})\frac{l-1-lU(
\sigma_{0})}{l-1}\right] \\
  &\geq 0 ,
\end{align*}
and
\begin{align*}
&\bar{\upsilon}_{t}-D\bar{\upsilon}_{xx}-\kappa\bar{\upsilon}
\left(1-\frac{\bar{\upsilon}}{M_{1}+\alpha}\right)\\
&=\frac{g\kappa}{M_{1}+\alpha}\left[(g-1)\left(V-\frac{gV(\sigma_{0})}{g-1}\right)^{2}
+V(\sigma_{0})\frac{(g-1)(M_{1}+\alpha)-gV(\sigma_{0})}{g-1}\right]\\
&\geq 0.
\end{align*}
 Then, by Lemma \ref{le2.2}, we have $h(t)\leq\sigma(t)~ \text{for}~t\geq 0$. Therefore, $$\ds{\limsup_{t\rightarrow +\infty}\frac{h(t)}{t} \leq \lim_{t\rightarrow +\infty}\frac{\sigma(t)}{t}= s_{\rm min}.}$$

Now, we prove $\ds{\liminf_{t\rightarrow +\infty}\frac{h(t)}{t} \geq s_{*}}.$ Let $(\underline{\upsilon},\underline{h})$ be the solution of the free boundary problem
\begin{equation}\label{4.1}
\left\{\begin{array}{ll}
\ds{\partial_{t}\underline{\upsilon}-D\partial_{xx}\underline{\upsilon}= \kappa\underline{\upsilon}\left(1-\frac{\underline{\upsilon}}{\alpha}\right),}&t>0,
0<x<\underline{h}(t),\vspace{7 pt}\\
\underline{\upsilon}_{x}(t,0)= 0,&t>0,\vspace{7 pt}\\
\underline{\upsilon}(t,\underline{h}(t))=0,&t>0,\vspace{7 pt}\\
\underline{h}'(t)=-\mu\rho\,\partial_{x}\underline{\upsilon}(t,\underline{h}(t)),&t>0.
\end{array}\right.
\end{equation}
  By the comparison principle, we then have $\underline{h}(t)\leq h(t).$ From Theorem 4.2 in \cite{dushou}, we have $$\ds{s_{*}=\lim_{t\rightarrow +\infty}\frac{\underline{h}(t)}{t}\leq\liminf_{t\rightarrow +\infty}\frac{h(t)}{t}.}$$
  \end{proof}

 \section{Proof of existence and uniqueness}\label{exist.}
 This section is devoted to prove the results about local existence and uniqueness of the solution to the main problem \eqref{FBP}.
 \begin{proof}[Proof of Lemma \ref{local.exist}]
 The main idea is adapted from \cite{Chenxf}.  Let $\zeta \in C^3([0,\infty))$ such that
$\zeta(y)=1$ if $| y-h_{0}|\leq \frac{h_{0}}{4},$  $\zeta(y)=0$ if $| y-h_{0}|> \frac{h_{0}}{2},$  $|\zeta'(y)|\leq \frac{6}{h_{0}},$  for all $y$. Define
 \begin{equation}\label{5.1}
x=y+\zeta(y)(h(t)-h_{0}),~~~0\leq y<+\infty.
\end{equation}
Note that, as long as $\ds{| h(t)-h_{0}|\leq \frac{h_{0}}{8}},$ $(x,t)\longrightarrow(y,t)$ is a diffeomorphism from $[0,+\infty)$ to $[0,+\infty)$.
Moreover,
\begin{equation}\label{5.2}
0\leq x\leq h(t)\Leftrightarrow 0\leq y \leq h_{0}\text{ and }
x=h(t) \Leftrightarrow y=h_{0}.
\end{equation}

We then compute
$$\frac{\partial y}{\partial x}=\frac{1}{1+\zeta'(y)(h(t)-h_{0})}=A(h(t),y(t)), $$
$$\frac{\partial^{2}y}{\partial x^{2}}=\frac{-\zeta''(y)(h(t)-h_{0})}
{[1+\zeta'(y)(h(t)-h_{0})]^{3}}=B(h(t),y(t)),$$
$$\frac{\partial y}{\partial t}=\frac{-h'(t)\zeta(y)}{1+\zeta'(y)(h(t)-h_{0})}=C(h(t),y(t)).$$
Now, we denote $$U(t,y(t))=u(t,x),  ~~V(t,y(t))=\upsilon(t,x),~~F(U,V)= U(1-U-\delta V)~\text{ and }~G(U,V)=\kappa V\left(1-\frac{V}{U+\alpha}\right).$$
Then problem \eqref{FBP} becomes
\begin{equation}\label{5.3}
\left\{\begin{array}{ll}
\ds{\frac{\partial U}{\partial t}=A^{2}U_{yy}+(B-C) U_{y}+F(U,V),} \qquad\qquad&t>0, 0<y<h_{0},\vspace{7 pt}\\
\ds{\frac{\partial V}{\partial t}= DA^{2}V_{yy}+(DB-C)V_{y}+G(U,V),} &t>0,0<y< h_{0},\vspace{7 pt}\\
\ds{U_{y}(t,0)=V_{y}(t,0)=U(t,h_{0})=V(t,h_{0})=0,} &t>0,\vspace{7 pt}\\
\ds{h'(t)=-\mu (U_{y}(t,h_{0})+\rho V_{y}(t,h_{0})),}&t>0,\vspace{7 pt}\\
\ds{U(0,y)=U_{0}(y)=u_{0}(y),} &y\in[0,h_{0}],t=0,\vspace{7 pt}\\
\ds{V(0,y)=V_{0}(y)=\upsilon_{0}(y),} &y\in[0,h_{0}],t=0.
\end{array}\right.
\end{equation}

We denote by $\tilde{h}=-\mu (U'_{0}(h_{0})+\rho V'_{0}(h_{0})).$  As in \cite{linzhigui2007}, we shall prove the local existence by using the contraction mapping theorem. We let $T$ such that $0<T\leq \frac{h_{0}}{8(1+\tilde{h})}$ and introduce the function spaces
\begin{equation}
\begin{array}{l}
X_{1T}:=\{ U\in C(\mathcal{R}):U(0,y)=U_{0}(y), \| U-U_{0}\|_{C(\mathcal{R})} \leq 1 \},\vspace{7 pt}\\\nonumber
X_{2T}:=\{ V\in C(\mathcal{R}):V(0,y)=V_{0}(y), \| V-V_{0}\|_{C(\mathcal{R})} \leq 1 \},\vspace{7pt}\\\nonumber
X_{3T}:=\{ h\in C^{1}[0,T],\| h'-\tilde{h}\|_{C[0,T]}\leq 1 \},\nonumber
\end{array}
\end{equation}
where \[\mathcal{R}=\{(t,y):  0 \leq  t \leq  T, 0 < y < h_{0}\}.\] Then, the space $X_{T}=X_{1T}\times X_{2T}\times X_{3T}$ is a complete metric space, with the metric
\[d((U_{1},V_{1},h_{1}),(U_{2},V_{2},h_{2}))=\| U_{1}-U_{2}\|_{C(\mathcal{R})}+\| V_{1}-V_{2}\|_{C(\mathcal{R})}+\| h'_{1}-h'_{2}\|_{C[0,T]}.\]  We have
    $$| h(t)-h_{0}|\leq \int^T_{0}|h'(s)|ds \leq T(1+\tilde{h})\leq \frac{h_{0}}{8},$$
 so that the mapping $(t,x) \rightarrow (t,y)$ is diffeomorphism.

 As mentioned above, we will construct a contraction mapping from $X_{T}$ into $X_{T}$ in order to prove the existence of a local solution. We begin this construction now. As $0\leq t\leq T,$ the coefficients $A$, $B$ and $C$ are bounded and $A^{2}$ is between two positive constants. By standard $L^{p}$ theory and the Sobolev imbedding theorem, for any $(U,V,h)\in X_{T},$  the following initial boundary value problem
\begin{equation}\label{5.4}
\left\{\begin{array}{ll}
\ds{\frac{\partial \hat{U}}{\partial t}=A^{2}\hat{U}_{yy}+(B-C) \hat{U}_{y}+F(U,V),} &t>0, 0<y<h_{0},\vspace{7 pt}\\
\ds{\frac{\partial \hat{V}}{\partial t}= DA^{2}\hat{V}_{yy}+(DB-C)\hat{V}_{y}+G(U,V),} \qquad\qquad&t>0,0<y<h_{0},\vspace{7 pt} \\
\ds{\hat{U}_{y}(t,0)=\hat{V}_{y}(t,0)=0,} &t>0,\vspace{7 pt}\\
\ds{\hat{U}(t,h_{0})=\hat{V}(t,h_{0})=0,} &t>0,\vspace{7 pt}\\
\ds{\hat{U}(0,y)=U_{0}(y)=u_{0}(y),} &y\in[0,h_{0}],\vspace{7 pt}\\
\ds{\hat{V}(0,y)=V_{0}(y)=\upsilon_{0}(y),}&y\in[0,h_{0}],
\end{array}\right.
\end{equation}
for any $\theta\in (0,1),$ admits a unique bounded solution $(\hat{U},\hat{V})\in C^{\frac{(1+\theta)}{2},1+\theta}(\mathcal{R})\times C^{\frac{(1+\theta)}{2},1+\theta}(\mathcal{R}).$ Moreover, \[\| \hat{U}\|_{C^{\frac{(1+\theta)}{2},1+\theta}(\mathcal{R})}\leq C_{1}\text{ and } \|\hat{V}\|_{C^{\frac{(1+\theta)}{2},1+\theta}(\mathcal{R})}\leq C_{2},\]
where the constants $C_{1}$ and $C_{2}$ depend on $h_{0},$  $\theta,$  $\| U_{0}\|_{C^{2}[0,h_{0}]}$ and  $\| V_{0}\|_{C^{2}[0,h_{0}]}$.

We now define $$\hat{h}(t)=h_{0}-\mu \int^t_{0} [\hat{U}_{y}(\tau,h_{0})+\rho\hat{V}_{y}(\tau,h_{0})]d\tau.$$
Then, $\hat{h}'(t)=-\mu (\hat{U}_{y}(t,h_{0})+\rho\hat{V}_{y}(t,h_{0}))\in C^{\frac{\theta}{2}}[0,T]$  and $\| \hat{h}'\|_{C^{\frac{\theta}{2}}}\leq C_{3},$  where $C_{3}$ depends on $\mu,$  $\rho,$  $h_{0},$  $\alpha,$  $\| U_{0}\|_{C^{2}[0,h_{0}]}$ and $\| V_{0}\|_{C^{2}[0,h_{0}]}$.

Now, we are ready to introduce the mapping $\Phi: (U,V,h)\rightarrow (\hat{U},\hat{V},\hat{h})$. We  claim that $\Phi$ maps $X_{T}$ into itself for sufficiently small $T$:\\
Indeed, if we take $T$ such that
 $$0< T \leq\min\left\{C_{1}^{\frac{-2}{1+\theta}},C_{2}^{\frac{-2}{1+\theta}},C_{3}^{\frac{-2}{\alpha}}\right\},$$
 we then have $$\|\hat{U}-U_{0}\|_{C(\mathcal{R})}\leq \| \hat{U}\|_{C^{0,\frac{1+\theta}{2}}(\mathcal{R})}T^{\frac{1+\theta}{2}}\leq C_{1}T^{\frac{1+\theta}{2}}\leq 1,$$
 $$\| \hat{V}-V_{0}\|_{C(\mathcal{R})}\leq \| \hat{V}\|_{C^{0,\frac{1+\theta}{2}}(\mathcal{R})}T^{\frac{1+\theta}{2}}\leq C_{2}T^{\frac{1+\theta}{2}}\leq 1,$$
 $$\| \hat{h}'-\tilde{h}\|_{C[0,T]}\leq \| \hat{h}'\|_{C^{\frac{\theta}{2}}[0,T]}T^{\frac{\theta}{2}}\leq C_{3}T^{\frac{\theta}{2}}\leq 1.$$
Thus we have $\Phi$  as a map from $X_{T}$ into itself.

Now we show that $\Phi$ is a contraction mapping for sufficiently small $T$. Let $(\hat{U}_{i},\hat{V}_{i},\hat{h}_{i})\in X_{T}$ for $i=1,2$. We set $\bar{U}=\hat{U_{1}}-\hat{U_{2}},$  and $\bar{V}=\hat{V_{1}}-\hat{V_{2}}.$  Then,
 \begin{equation}\label{5.5}
\left\{\begin{array}{l}
\ds{\frac{\partial \bar{U}}{\partial t}=A^{2}(h_{2}(t),y(t))\bar{U}_{yy}+[B(h_{2}(t),y(t))-C(h_{2}(t),y(t))]
\bar{U}_{y}+\mathbf{F},} \vspace{7 pt}\\
\text{ for }t>0\text{ and } 0<y<h_{0}.\vspace{7 pt}\\
\ds{\frac{\partial\bar{V}}{\partial t}= DA^{2}(h_{2}(t),y(t))\bar{V}_{yy}+(DB(h_{2}(t),y(t))-C(h_{2}(t),y(t)))\bar{V}_{y}+
\mathbf{G},} \vspace{7 pt}\\
\text{ for }t>0 \text{ and }  0<y<h_{0}.\vspace{7 pt}\\
\ds{\bar{U}_{y}(t,0)=\bar{V}_{y}(t,0)=0,}\quad t>0,\vspace{7 pt}\\
\ds{\bar{U}(t,h_{0})=\bar{V}(t,h_{0})=0,}\quad t>0,\vspace{7 pt}\\
\ds{\bar{U}(0,y)=\bar{V}(0,y)=0,} \quad 0\leq y\leq h_{0},
\end{array}\right.
\end{equation}
where
 \begin{equation*}
\begin{array}{lll}
\mathbf{F}:&=&[A^{2}(h_{1}(t),y(t))-A^{2}(h_{2}(t),y(t))]\hat{U}_{1yy}+[(B(h_{1}(t),y(t))-B(h_{2}(t),\vspace{7 pt}\\
&&y(t)))-
(C(h_{1}(t),y(t))-C(h_{2}(t),y(t))]\hat{U}_{1y}+F(U_{1},V_{1})-F(U_{2},V_{2}).
\end{array}
\end{equation*}
\begin{equation*}
\begin{array}{lll}
\mathbf{G}:&=&[DA^{2}(h_{1}(t),y(t))-DA^{2}(h_{2}(t),y(t))]\hat{V}_{1yy}+
[(DB(h_{1}(t),y(t))-DB(h_{2}(t),\vspace{7 pt}\\
&&y(t)))-(C(h_{1}(t),y(t))
-C(h_{2}(t),y(t)))]\hat{V}_{1y}+G(U_{1},V_{1})-G(U_{2},V_{2}).
\end{array}
\end{equation*}
Again, using standard $L^{p}$ estimates and the Sobolev embedding theorem, we have
$$\ds{\|\bar{U}\|_{C^{\frac{1+\theta}{2},1+\theta}(\mathcal{R})}\leq C_{4}(\| U_{1}-U_{2}\|_{C(\mathcal{R})}+\| V_{1}-V_{2}\|_{C(\mathcal{R})}+\| h_{1}-h_{2}\|_{C^{1}[0,T]}),}$$
$$\| \bar{V}\|_{C^{\frac{1+\theta}{2},1+\theta}}(D)\leq C_{5}(\| U_{1}-U_{2}\|_{C(\mathcal{R})}+\| V_{1}-V_{2}\|_{C(\mathcal{R})}+\| h_{1}-h_{2}\|_{C^{1}[0,T]}),$$
and
$$\| \bar{h}_{1}'-\bar{h}_{2}'\|_{C^{\frac{1+\theta}{2},1+\theta}}([0,T])\leq C_{6}(\| U_{1}-U_{2}\|_{C(\mathcal{R})}+\| V_{1}-V_{2}\|_{C(\mathcal{R})}+\| h_{1}-h_{2}\|_{C^{1}[0,T]}),$$
where the constants $C_{4},~C_{5},$ and $C_{6}>0$  depend on $A,$  $B,$  $C$ and $C_{i},$ for $ i=1, 2, 3$.

\noindent We also have
\begin{equation*}
\begin{array}{ll}
\|\bar{U}\|_{C(\mathcal{R})}+\|\bar{V}\|_{C(\mathcal{R})}+\|\bar{h}_{1}'-\bar{h}_{2}'\|_{C[0,T]}&\leq T^{\frac{1+\theta}{2}}\|\bar{U}\|_{C^{\frac{1+\theta}{2},1+\theta}(\mathcal{R})}+T^{\frac{1+\theta}{2}}\| \bar{V}\|_{C^{\frac{1+\theta}{2},1+\theta}(\mathcal{R})}\vspace{7 pt}\\
~&+T^{\frac{\theta}{2}}\|\bar{h}_{1}'-\bar{h}_{2}'\|_{C^{\frac{1+\theta}{2},1+\theta}([0,T])}.
\end{array}\end{equation*}

Based on the above, if $T\in (0,1],$  then
\begin{equation*}
\begin{array}{ll}
\|\bar{U}\|_{C(\mathcal{R})}+\|\bar{V}\|_{C(\mathcal{R})}+\| \bar{h}_{1}'-\bar{h}_{2}'\|_{C([0,T])}&\leq C_{7}T^{\frac{\theta}{2}}\left\{\|U\|_{C(\mathcal{R})}
+\|V\|_{C(\mathcal{R})}\right.\vspace{7 pt}\\
~&\left.+\| h_{1}'- h_{2}'\|_{C([0,T])}\right\},
\end{array}\end{equation*}
where $C_{7}:=\max \{C_{4},C_{5},C_{6}\}$. We choose
$$T=\frac{1}{2} \min \left\{1,\frac{h_{0}}{8(1+\tilde{h})},C_{1}^{\frac{-2}{1+\theta}}
,C_{2}^{\frac{-2}{1+\theta}},C_{3}^{\frac{-2}{\theta}},C_{7}^{\frac{-2}{\theta}} \right\},$$
and apply the contraction mapping theorem to conclude that $\Phi$ has a unique fixed point in $X_{T}$. This completes the proof of Lemma \ref{local.exist}.

 \end{proof}

  We now turn to the

  \begin{proof}[Proof of Lemma \ref{estimates}]
  The strong maximum principle yields that $u>0$ and  $\upsilon>0,$  for all $t\in [0,T]$ and $x\in [0,h(t)).$ Since $u(t,h(t))=\upsilon(t,h(t))=0,$   then Hopf  Lemma yields that $u_{x}(t,h(t))<0$ and $\upsilon_{x}(t,h(t))<0$ for all $t\in(0,T].$ Thus, $h'(t)>0$ for all $t\in (0,T]$.

Now, we consider the following initial value problem
\begin{equation}\label{5.9}
\bar{u}'(t)=\bar{u}(1-\bar{u})~ \text{ for } t>0,\qquad
\bar{u}(0)=\|u_{0}\|_{\infty} .
\end{equation}
The comparison principle implies that $u(t,x)\leq \bar{u}(t,x)\leq\max\{1,\|u_{0}\|_{\infty}\}$ for all $t\in[0,T]$ and for all  $x\in[0,h(t)]$. Similarly, we consider the following problem
\begin{equation}\label{5.10}
\bar{\upsilon}'(t)=\kappa\bar{\upsilon}(1-\frac{\bar{\upsilon}}{M_{1}+\alpha})\text{ for } t>0,\qquad
\bar{\upsilon}(0)=\|\upsilon_{0}\|_{\infty} ,
\end{equation}
to conclude, via the comparison principle, that $\upsilon(t,x)\leq \max\{M_{1}+\alpha,\|\upsilon_{0}\|_{\infty}\}$ for all $t\in[0,T]$  and $x\in[0,h(t)]$.

We turn now to prove that $h'(t)\leq \Lambda~\text{for}~t\in (0,T]$. In order to achieve this, we shall compare $u$ and $\upsilon$ to the following two auxiliary functions
\begin{equation*}\label{5.11}
\begin{array}{l}
\omega_{1}(t,x)=M_{1}[2M(h(t)-x)-M^{2}(h(t)-x)^{2}] \text{ for }t\in[0,T]~\&~ x\in[h(t)-M^{-1}, h(t)],\vspace{7 pt} \\

\text{and}\vspace{7 pt} \\
\omega_{2}(t,x)=M_{2}[2M(h(t)-x)-M^{2}(h(t)-x)^{2}] \text{ for }t\in[0,T]~\&~x\in[h(t)-M^{-1}, h(t)].
\end{array}
\end{equation*}
As a first choice, we pick $M=\ds{\max \left\{\frac{1}{h_{0}},\frac{\sqrt{2}}{2},\sqrt{\frac{\kappa}{2D}}\right\}}$  in order to obtain that
\begin{equation}\label{5.12}
\left\{\begin{array}{l}
\partial_{t}\omega_{1}-\partial_{xx}\omega_{1}\geq 2M_{1}M^{2}\geq u\geq u(1-u-\delta\upsilon)=\partial_{t}u-\partial_{xx}u,\vspace{7 pt}\\
\partial_{t}\omega_{2}-D\partial_{xx}\omega_{2}\geq 2DM_{2}M^{2}\geq \kappa\upsilon\geq\kappa\upsilon(1-\frac{\upsilon}{u+\alpha})=\partial_{t}\upsilon-D\partial_{xx}\upsilon,\vspace{7 pt}\\
\omega_{1}(t,h(t))=0=u(t,h(t)),\vspace{7 pt}\\
\omega_{2}(t,h(t))=0=\upsilon(t,h(t)),\vspace{7 pt}\\
\omega_{1}(t,h(t)-M^{-1})=M_{1}\geq u(t,h(t)-M^{-1}),\vspace{7 pt}\\
\omega_{2}(t,h(t)-M^{-1})=M_{2}\geq\upsilon(t,h(t)-M^{-1}).
\end{array}\right.
\end{equation}
We plan to use a comparison argument to complete the proof. For this, we need to have $\omega_{1}(0,x)\geq u_{0}(x)$ and $\omega_{2}(0,x)\geq \upsilon_{0}(x)$. Note that, for $x\in [h(t)-M^{-1}, h(t)],$ \[u_{0}(x)=-\int^{h_{0}}_{x}u'(s)ds\leq (h_{0}-x)\|u'\|_{C[0,h_{0}]},\] \[\upsilon_{0}(x)=-\int^{h_{0}}_{x}\upsilon'(s)ds\leq (h_{0}-x)\|\upsilon'\|_{C[0,h_{0}]},\]
 \[\omega_{1}(0,x)=M_{1}M(h_{0}-x)[2-M(h_{0}-x)]\geq M_{1}M(h_{0}-x)\] \[\text{and }\omega_{2}(0,x)=M_{2}M(h_{0}-x)[2-M(h_{0}-x)]\geq M_{1}M(h_{0}-x)\text{ for }x\in [h_{0}-M^{-1},h_{0}].\] Thus, if $\ds{ M =\max\left\{ \frac{\|u'\|_{C[0,h_{0}]}}{M_{1}}, \frac{\|\upsilon'\|_{C[0,h_{0}]}}{M_{2}}\right\}},$ then we have $\omega_{1}(0,x)\geq u(0,x)$ and $\omega_{2}(0,x)\geq \upsilon(0,x).$ By now, we have two constraints that $M$ should satisfy.  We choose $M$ such that  \[M = \max\left\{\frac{1}{h_{0}},~\frac{\sqrt{2}}{2},~\sqrt{\frac{\kappa}{2D}},~\frac{\|u'\|_{C[0,h_{0}]}}{M_{1}}, ~\frac{\|\upsilon'\|_{C[0,h_{0}]}}{M_{2}}\right\}.\] Then, the comparison principle yields that $\omega_{1}\geq u$ and  $\omega_{2}\geq \upsilon$ for $t\in [0,T]$ and $x\in [h(t)-M^{-1}, h(t)]$. Since $\omega_{1}(t,h(t))=u(t,h(t))=0$ and $\omega_{2}(t,h(t))=\upsilon(t,h(t))=0,$ we then obtain that
\[\partial_{x}u(t,h(t))\geq \partial_{x}\omega_{1}(t,h(t))=-2MM_{1} \text{ and } \partial_{x}\upsilon(t,h(t))\geq\partial_{x}\omega_{2}(t,h(t))=-2MM_{2}.\] Therefore, we have $h'(t)\leq \Lambda ,$ where $\Lambda:=2M\mu(M_{1}+ \rho M_{2})$. The proof of Lemma \ref{estimates} is now complete.
\end{proof}

\section{Discussion and summary of the results}\label{discuss}
In this paper, we considered a Leslie-Gower and Holling-type II predator-prey model in a one-dimensional environment. The model studies two species that initially occupy the region $[0,h_{0}]$  and both have a tendency to expand their territory. We obtain several results in  this setting.
\begin{enumerate}[(i)]
\item Theorem \ref{th3.1} and Theorem \ref{th3.2} provide the asymptotic behavior of the two species when spreading success and spreading failure, in terms of $h_{\infty}$:

 If $h_{\infty}=+\infty,$  then we have $$\lim_{t\rightarrow+\infty}u(t,x)=u^{*}, \lim_{t\rightarrow+\infty}\upsilon(t,x)=\upsilon^{*}.$$

  If $h_{\infty}<+\infty,$  then we have
$$\lim_{t\rightarrow+\infty}\|u(t,\cdot)\|_{C[0,h(t)]}=0,  \lim_{t\rightarrow+\infty}\| \upsilon(t,\cdot)\|_{C[0,h(t)]}=0.$$

\item A spreading-vanishing dichotomy can be established by using Lemma \ref{le3.1} and the critical length for the habitat can be characterize by $h_{*},$  in the sense that the two species will spread successfully if $h_{\infty}>h_{*},$  while the two species will vanish eventually if $h_{\infty}\leq h_{*}$. If the size of initial habitat $h_{0}$ is not less than $h_{*},$  or $h_{0}$ is less than $h_{*},$  but $\mu \geq \bar{\mu}$ or $0 < D\leq D^{*} ,$  then the two species will spread successfully. While if the size of initial habitat is less than $h_{*}$ and
$\mu \leq \underline{\mu}$ or $D^{*}< D \leq \kappa,$  then the two species will disappear eventually.\\

\item Finally, Theorem \ref{spreading.speed} reveals that the spreading speed (if exists) is between the minimal speed of traveling wavefront solutions for the predator-prey model on the whole real line (without a free boundary) and an elliptic problem induced from the original model.
\end{enumerate}


\begin{thebibliography}{99}

\bibitem{MM2003} M.A. Aziz-Alaoui and M. Daher-Okiye, \emph{Boundedness and Global Stability or a Predator-prey Model with Modified Leslie-Gower and Holling-Type II Schemes}, Applied Mathematics Letters  16 (2003), pp. 1069--1075.

\bibitem{Cantrell2003} R.S. Cantrell and C. Cosner, \emph{Spatial Ecology via Reaction-Diffusion Equations}, Wiley, Chichester 2003.

\bibitem{L-G4} F. Chen, L. Chen, and X. Xie, \emph{On a Leslie-Gower predator-preymodel incorporating a prey refuge}, Nonlinear Analysis: Real World Applications  10 (2009), pp. 2905--2908.

\bibitem{Chenxf} X.F. Chen and A. Friedman,  \emph{A free boundary problem arising in a model of wound healing}, SIAM J. Math. Anal. 32 (2000), pp. 778--800.

\bibitem{dushou}Y.H. Du and Z.G. Lin, \emph{Spreading - vanishing dichotomy in the diffusive logistic model with a free boundary},  SIAM J. Math. Anal. 42 (2010), pp. 377--405.

\bibitem{dusupandinf2014} Y.H. Du and Z.G. Lin, \emph{The diffusive competition model with a free boundary: invasion of a superior or inferior competitior}, Discrete Contin. Dyn. Syst. Ser. B 19 (2014), pp. 3105--3132.

\bibitem{b}  J.S. Guo and C.H. Wu, \emph{On a free boundary problem for a two-species weak competition system}, J.Dyn. Diff. Equat. 24 (2012), pp. 873--895.

\bibitem{HH1995}  S.B. Hsu and T.W. Huang, \emph{Global stability for a class of predator-prey systems}, 55 (1995), pp. 763--783.

\bibitem{L-G1} A. Korobeinikov, \emph{A Lyapunov function for Leslie-Gower predator-prey models}, Appl. Math. Lett. 14 (2001), pp. 697--699.

\bibitem{linzhigui2007} Z.G. Lin, \emph{A free boundary problem for a predator-prey model}, Nonlinearity  20 (2007), pp.1883--1892.

\bibitem{WN2016} W.J. Ni and M.X. Wang, \emph{Dynamics and patterns of a diffusive Leslie-Gower prey-predator model with strong Allee effect in prey}, J. Differential Equations 261(2016), pp. 4244--4274.

\bibitem{wangjie}  J. Wang, \emph{The selection for dispersal: a diffusive competition model with a free boundary}, Z. Angew. Math. Phys. 66 (2015), pp. 2143--2160.

\bibitem{w3} M.X. Wang, \emph{On some free boundary problems of the prey-predator model}, J. Differential Equations 256 (2014), pp. 3365--3394.

\bibitem{ziyou1} M.X. Wang, \emph{Spreading and vanishing in the diffusive prey-predator model with a free boundary}, Commun. Nonlinear Sci. Numer. Simul. 23 (2015), pp. 311--327.

\bibitem{ziyou2} M.X. Wang and Y. Zhang, \emph{Two kinds of free boundary problems for the diffusive prey-predator model}, Nonlinear Anal. Real World Appl. 24 (2015), pp. 73--82.

\bibitem{w2} M.X. Wang and J.F. Zhao, \emph{A free boundary problem for a predator-prey model with double free boundaries}, J. Dynam. Differential Equations (2015), pp. 1--23.

\bibitem{L-G5}  R.Z. Yang and J.J. Wei, \emph{The Effect of Delay on A Diffusive Predator-Prey System with Modified Leslie-Gower Functional Response}, Bull. Malays. Math. Sci. Soc. 40 (2017), pp. 51--73.

\bibitem{ziyou3} J.F. Zhao and M.X. Wang, \emph{A free boundary problem of a predator-prey model with higher dimension and heterogeneous environment} Nonlinear Anal. Real World Appl. 16 (2014), pp. 250--263.


\bibitem{wx1} Y. Zhang and M.X Wang, \emph{A free boundary problem of the ratio-dependent prey-predator model}, Applicable Analysis 94 (2015), pp. 2147--2167.

\bibitem{ZhouJun2014} J. Zhou,  \emph{Positive solutions of a diffusive Leslie-Gower predator-prey model with Bazykin functional response}, Z. Angew. Math. Phys. 65 (2014), pp. 1--18.

\bibitem{ziyou4} L. Zhou, S. Zhang and Z.H. Liu, \emph{A free boundary problem of a predator-prey model with advection in heterogeneous environment}, Appl. Math. Comput. 289 (2016), pp. 22--36.

\bibitem{zhouxiao} P. Zhou and D.M. Xiao, \emph{The diffusive logistic model with a free boundary in heterogeneous environment}, J.Differential Equations 256 (2014), pp. 1927--1954.

\end{thebibliography}
\end{document}